\tikzstyle{every picture}=[line width=.7pt,minimum size=3pt,every label/.append style={font=\normalsize},label distance=2pt]
\tikzstyle{every node}=[font=\normalsize,circle,draw=black,fill=black,inner sep=0pt,minimum width=1.3pt]
\newtheorem*{rep@theorem}{\rep@title}
\newcommand{\newreptheorem}[2]{%
\newenvironment{rep#1}[1]{%
 \def\rep@title{#2 \ref{##1}}%
 \begin{rep@theorem}}%
 {\end{rep@theorem}}}
\theoremstyle{plain}
\newtheorem{theorem}{Theorem}[section]
\newtheorem{proposition}[theorem]{Proposition}
\newtheorem{corollary}[theorem]{Corollary}
\newtheorem{lemma}[theorem]{Lemma}
\theoremstyle{definition}
\newtheorem{construction}[theorem]{Construction}
\newtheorem{example}[theorem]{Example}
\newtheorem{question}[theorem]{Question}
\newtheorem*{question*}{Question}
\newtheorem{conjecture}[theorem]{Conjecture}
\newtheorem{remark}[theorem]{Remark}
\date{}
\newcommand{\pd}{{\rm pd}}
\newcommand{\reg}{{\rm reg}}
\newcommand{\rk}{{\rm rank}\ }
\newcommand{\Q}{\mathbb{Q}}
\newcommand{\Z}{\mathbb{Z}}
\newcommand{\N}{\mathbb{N}}
\newcommand{\RP}{\mathbb{RP}}
\newcommand{\mf}{\mathfrak}
\title{Powers of monomial ideals with characteristic-dependent Betti numbers}
\author[D. Bolognini, A. Macchia, F. Strazzanti, V. Welker]{Davide Bolognini, Antonio Macchia, Francesco Strazzanti, Volkmar Welker}
\address{{\small Davide Bolognini, Dipartimento di Ingegneria Industriale e Scienze Matematiche, Universit\`a Politecnica delle Marche, Via Brecce Bianche, 60131 Ancona, Italy}}
\email{{\small davide.bolognini.cast@gmail.com}}
\address{{\small Antonio Macchia, Fachbereich Mathematik und Informatik, Freie Universit\"at Berlin, Arnimallee 2, 14195 Berlin, Germany}}
\email{{\small macchia.antonello@gmail.com}}
\address{{\small Francesco Strazzanti, Dipartimento di Matematica ``Giuseppe Peano'', Universit\`a degli Studi di Torino, Via Carlo Alberto 10, 10123 Torino, Italy}}
\email{{\small francesco.strazzanti@gmail.com}}
\address{{\small Volkmar Welker, Philipps-Universit\"at Marburg, Fachbereich Mathematik und Informatik, 35032 Marburg, Germany}}
\email{{\small welker@mathematik.uni-marburg.de}}
\thanks{The second author was supported by the Deutsche Forschungsgemeinschaft (DFG, German Research Foundation) – project number 454595616. \\
\textbf{Data availibility} Data sharing not applicable to this article as no datasets were generated or analysed during the current study.}
\begin{document}

\maketitle

\begin{center} 
\textit{Dedicated to Professor J\"urgen Herzog on the occasion of his 80th birthday}
\end{center}

\begin{abstract}
We explore the dependence of the Betti numbers of monomial ideals on the characteristic of the field. A first observation is that for a fixed prime $p$ either the $i$-th Betti number of all high enough powers of a monomial ideal differs in characteristic $0$ and in characteristic $p$ or it is the same for all high enough powers.  
In our main results we provide constructions and explicit examples of monomial ideals all of whose powers have some characteristic-dependent Betti numbers or whose asymptotic regularity depends on the field. We prove that, adding a monomial on new variables to a monomial ideal, allows to spread the characteristic dependence to all powers. For any given prime $p$, this produces an edge ideal such that the Betti numbers of all its powers over $\Q$ and over $\Z_p$ are different. Moreover, we show that, for every $r \geq 0$ and $i \geq 3$ there is a monomial ideal $I$ such that some coefficient in a degree $\geq r$ of the Kodiyalam polynomials $\mathfrak P_3(I),\ldots,\mathfrak P_{i+r}(I)$ depends on the characteristic. We also provide a summary of related results and speculate about the behaviour of other combinatorially defined ideals.
\end{abstract}

\bigskip

\noindent {\bf Mathematics Subject Classification (2020):} 13F55, 13D02. \\
\noindent {\bf Keywords:} powers of monomial ideals, Betti numbers, Betti splitting, field dependence, Castelnuovo-Mumford regularity, edge ideals, binomial edge ideals.

\section{Introduction}

Betti numbers of minimal free resolutions of ideals in a polynomial ring over a field provide some of the most important invariants of ideals. In general, Betti numbers are very hard to compute and this is still true
if one restricts the question to monomial ideals. However, in this setting there are 
some powerful tools available which 
facilitate the calculation, e.g., Hochster's formula \cite{H77}, the lcm-lattice \cite{GPW99}, Betti splittings \cite{FHV09, B16, BF20}
and in characteristic $0$ even a construction of a minimal free resolution \cite{EMO19}.

Since the monic monomials generating a monomial ideal $I$ do not reveal any
information about the coefficient field of the polynomial ring, monomial
ideals can be considered over any coefficient field. 
It is well known that the Betti numbers of a monomial ideal $I$ in a polynomial ring with coefficients in a field $k$ may depend on the characteristic of $k$.
Probably, the first and simplest example of this phenomenon is the Stanley-Reisner ideal of the triangulation of the real projective plane $\RP^2$, used by Reisner in \cite{R76} to demonstrate the characteristic dependence of the
Cohen-Macaulay property. Here the Betti numbers change in characteristic $2$ compared to any other characteristic. However, this kind of examples have mostly been relegated to illustrate weird behaviours that can occur in the study of resolutions and their algebraic invariants, focusing on the independence of the field, see e.g. \cite{K06} and \cite{DK14}.

In this paper we adopt the opposite perspective, exploring the characteristic dependence of the Betti numbers of monomial ideals and 
in particular how it reverberates in their powers.

Recall that a monomial ideal $I$ has a unique minimal system of monic monomial generators $G(I)$.
Throughout the paper, when we write that we study the \textit{field} or \textit{characteristic dependence} of an invariant for a monomial ideal $I$, 
we mean that we study for different fields $k$ this invariant for the ideal generated by $G(I)$ in the polynomial ring $k[x_1, \dots, x_n]$. For example, we write $\beta_i^k(I)$ for the $i$-th Betti number of $I$ seen as an ideal in $k[x_1, \dots, x_n]$.   

We will be mainly interested in the asymptotic characteristic dependence of Betti numbers for high powers of monomial ideals. 
Recall that, if $I$ is a homogeneous ideal of $k[x_1,\dots,x_n]$, Kodiyalam \cite{K93} proved that for every $1 \leq i \leq n$ there exists a polynomial $\mathfrak{P}^k_i(I)(h)$ such that $\mathfrak{P}^k_i(I)(h)=\beta^k_{i-1}(I^h)$ for $h \gg 0$; we call $\mathfrak{P}^k_i(I)$ the $i$-th {\it Kodiyalam polynomial} of $I$. As a consequence, we observe that either the $i$-th Betti number of all high enough powers of an ideal depends on the characteristic of the field or it does not.

\begin{repproposition}{P.KodiyalamPolynomials}
Let $I$ be a monomial ideal in $k[x_1,\dots,x_n]$ and $p \geq 2$ be a prime number. Then, for every integer $i \geq 0$ there exists $h_i \geq 1$ such that either $\beta^{\Z_p}_i(I^h)=\beta^{\Q}_i(I^h)$ for every $h\geq h_i$ or $\beta^{\Z_p}_i(I^h) \neq \beta^{\Q}_i(I^h)$ for every $h\geq h_i$.
\end{repproposition}

The propagation of characteristic dependence of Betti numbers from the first powers to higher powers is much more mysterious. For instance, 
we show an example of a monomial ideal, indeed an edge ideal, whose Betti numbers are independent of the characteristic, but some Betti numbers of its square depend on the field (see Example \ref{E.edgeIdealWithCharDepSquare}). Using the lcm-lattice for proofs, we provide several examples of squarefree monomial ideals with characteristic-dependent Betti numbers in all powers. One of them is the Stanley-Reisner ideal of a minimal triangulation of the \textit{Klein bottle}.

\begin{reptheorem}{T.KleinBottle}
Let $I = (x_3x_8, x_4x_5, x_6x_7, x_7x_8, x_1x_2x_4, x_1x_3x_4, x_2x_3x_4, x_1x_2x_5, x_2x_3x_5, \break x_1x_4x_6, x_1x_5x_6, x_2x_5x_6, x_1x_2x_7, x_1x_3x_7, x_2x_4x_7, x_3x_5x_7,  x_1x_2x_8, x_1x_5x_8, x_2x_6x_8, x_1x_3x_6, \break x_2x_3x_6, x_4x_6x_8)$ in $k[x_1,\dots,x_8]$ be the Stanley-Reisner ideal of the triangulation of the Klein bottle in Figure \ref{F.KleinBottle}. Then, the Betti numbers of $I^h$ depend on the field for every $h \geq 1$.
\end{reptheorem}

We then turn to the characteristic dependence of the Castelnuovo-Mumford regularity $\reg_k(I)$ of powers of $I$. In terms of Betti numbers this can be seen as the question of whether certain graded Betti numbers are zero and nonzero over different fields. In \cite[Remark 5.3]{MV21} Minh and Vu exhibit a specific edge ideal whose asymptotic regularity depends on the field. We present a general construction that produces a monomial ideal with the same property: it is enough to add a certain power of a new variable $y$ to a monomial ideal whose regularity depends on the field:

\begin{repproposition}{P.asymptoticRegularity}
Let $I \subseteq k[x_1,\dots,x_n,y]$ be a nonzero monomial ideal with generators in the variables $x_1,\dots,x_n$. Suppose that there exists another field $k'$ such that $\reg_{k}(I) \neq \reg_{k'}(I)$. Then, there exists $c \in \mathbb{N}$ such that $\reg_{k} ((I+(y^c))^h) \neq \reg_{k'} ((I+(y^c))^h)$, for $h \gg 0$.
\end{repproposition}

It is also interesting to look for simple constructions that propagate the characteristic dependence of the Betti numbers to all powers. With an argument involving Betti splittings, we prove the following result:

 \begin{reptheorem}{T.Formula}
Let $I$ be a monomial ideal in $k[x_1,\dots,x_n, y_1,\dots,y_r]$, with generators in the variables $x_1,\dots,x_n$. Let $w$ be a monic monomial in the variables $y_1,\dots,y_r$. If $I^h$ has characteristic-dependent Betti numbers for some $h \geq 1$, then the same holds for $(I +(w))^{\ell}$ for every $\ell \geq h$.
\end{reptheorem}

This result has a number of interesting consequences. First, in Corollary \ref{C.pfoldDunceCapCorollary}, for every prime number $p$, we construct an edge ideal, coming from the \textit{p-fold dunce cap} (see Construction \ref{C.pfoldDunceCap}), all of whose powers have different Betti numbers over $\Q$ and $\Z_p$.

Lemma \ref{L.spreadingDependenceToPowers} provides a lower bound on the size of dependencies produced by Theorem \ref{T.Formula}, whereas Lemma \ref{L.SpreadingDependence2} shows that in each power $(I + (y_1,\dots,y_r))^h$ there are at least exponentially many dependencies in $h$.

As a further consequence we show that for the Kodiyalam polynomials the characteristic dependence can be spread over consecutive homological positions:

\begin{reptheorem}{T.Kodiyalam}
For every $i \geq 3$ and for every $r \in \mathbb{N}$, there exists a monomial ideal $I$ such that all the Kodiyalam polynomials $\mathfrak P^k_{3}(I),\mathfrak P^k_{4}(I),\dots,\mathfrak P^k_{i+r}(I)$ have the coefficient at some degree $\geq r$ depending on the characteristic of $k$.
\end{reptheorem}

We conclude with some open questions and extensions to combinatorially defined ideals beyond monomial ideals.
In particular, we provide interesting examples of binomial edge ideals, exhibiting various behaviours with respect to characteristic dependence of the Betti numbers of their first few powers.

\section{Notation and preliminaries}

Let $R=k[x_1, \dots, x_n]$ be the standard graded polynomial ring over a field $k$ and let $I$ be a monomial ideal in $R$. For every $i,j \in \mathbb{N}$, the {\em graded Betti numbers} of $I$, defined as $\beta^k_{i,j}(I)=\dim_k {\rm Tor}^R_i(I,k)_j$, are invariants of the minimal graded free resolution of $I$.
We denote by $\beta^k_i(I)=\sum_{j} \beta^k_{i,j}(I)$ the \textit{$i$-th (total) Betti number} of $I$. If $R$ is standard multigraded, i.e., $\deg(x_i)$ is the $i$-th standard basis vector of $\mathbb{R}^n$, we can define multigraded Betti numbers analogously. In this case, 
if $\alpha = (\alpha_1, \alpha_2, \dots, \alpha_n) \in \mathbb{N}^n$, we set $\mathbf{x}^\mathbf{\alpha} = x_1^{\alpha_1} x_2^{\alpha_2} \cdots x_n^{\alpha_n}$ and denote the corresponding \textit{multigraded Betti number} by $\beta^k_{i,\alpha}(I)$. Throughout the paper we are going to use the term multidegree to refer either to the exponent vector $\alpha$ or to the monomial $m = \mathbf{x}^\alpha$. In the latter case we use the notation $\beta_{i,m}^k(I)$.

Betti numbers encode many important properties of $I$ and their behaviour has been intensively studied in literature. Hilbert's Syzygy Theorem states that $\beta^k_{i}(I)=0$ for $i>n$ and the maximum $i$ such that $\beta^k_{i}(I) \neq 0$ is the {\it projective dimension} of $I$, denoted by $\pd_k(I)$. Another important invariant of $I$ that can be read off from its Betti numbers is the {\it Castelnuovo-Mumford regularity}, defined as $\reg_k(I)=\max\{j-i : \beta^k_{i,j}(I) \neq 0\}$. Whenever it is not important to specify the field $k$, we simply write $\beta_i(I), \pd(I), \reg(I)$.

Even though the field $k$ is involved in the definition of Betti numbers, the degree of influence of the field is not immediately obvious. Indeed, it is well known that the Betti numbers only depends on the characteristic of $k$. Moreover, from the Universal Coefficient Theorem it follows that 
\begin{equation}\label{Eq.universalCoefficientTheorem}
\beta^{\mathbb{Q}}_{i,\alpha}(I) \leq \beta^{\mathbb{Z}_p}_{i,\alpha}(I)
\end{equation}
for every $i \in \mathbb{N}$, $\alpha \in \N^n$ and every prime integer $p$. We redirect the reader to \cite[Proposition 1.3]{K06} for more details. 

When $I$ is a monomial ideal, a useful tool to compute its Betti numbers is its {\it lcm-lattice} $L_I$, introduced in \cite{GPW99}. Let $G(I)$ denote the unique minimal system of monomial generators of $I$. The elements of $L_I$ are the least common multiples of the subsets of $G(I)$ ordered by divisibility. We remark that the minimal element of $L_I$ is $1$, considered as the least common multiple of the
empty set, the atoms are the elements of $G(I)$, and the maximal element is the least common multiple of the elements of $G(I)$.

Given $m \in L_I$, we set $(1,m)_{L_I}=\{m' \in L_I : 1 < m' < m \}$ to be the open interval below $m$ in $L_I$. The {\it order complex} of $(1,m)_{L_I}$ is the abstract simplicial complex whose faces are the chains in $(1,m)_{L_I}$. Identifying $(1,m)_{L_I}$ with its order complex
we can consider the reduced simplicial homology groups
$\widetilde{H}_{\bullet} ((1,m)_{L_I}; k)$.
With this notation, \cite[Theorem 2.1]{GPW99} shows that the multigraded Betti numbers of $I$ are given by
\[
\beta^k_{i,m}(I)= \dim \widetilde{H}_{i-1} ((1,m)_{L_I}; k)
\]
for every $m \in L_I$ and by $\beta^k_{i,m}(I)=0$ if $m \notin L_I$.

For further details about simplicial complexes, Stanley-Reisner ideals and their combinatorics we refer to \cite{HH11}.

Another technique to compute the Betti numbers of a monomial ideal $I$ is the so-called {\it Betti splitting}, see \cite{FHV09, B16, BF20}.
Let $I,J,K$ be monomial ideals in $R=k[x_1,\dots,x_n]$ such that $G(I)$ is the disjoint union of $G(J)$ and $G(K)$. We say that $I=J+K$ is a {\it Betti splitting} of $I$ if
$$\beta^k_{i}(I)=\beta^k_{i}(J)+\beta^k_{i}(K)+\beta^k_{i-1}(J \cap K)$$
for every $i \in \mathbb{N}$. Given the short exact sequence $$0 \rightarrow J \cap K \rightarrow J \oplus K \rightarrow J+K \rightarrow 0,$$ we have an induced long exact sequence of $\mathrm{Tor}$ modules, and it is not difficult to show that $I=J+K$ is a Betti splitting of $I$ if and only if the induced maps
\[
\mathrm{Tor}^R_i(J \cap K,k) \rightarrow \mathrm{Tor}^R_i(J,k) \oplus \mathrm{Tor}^R_i(K,k)
\]
are zero for every $i \in \mathbb{N}$, see \cite[Proposition 2.1]{FHV09}. Considering graded or multigraded maps, one can define Betti splittings in the graded or multigraded setting.

\section{Asymptotic behaviour}\label{S.asymptoticBehaviour}

Since the dependence on the field of the Betti numbers of an ideal is only through its characteristic, we will compare the Betti numbers over $\Q$ and $\Z_p$ for some prime integer $p \geq 2$. 



\subsection{General facts} 
We start by studying the asymptotic characteristic dependence for monomial ideals.

\begin{proposition}\label{P.KodiyalamPolynomials}
Let $I$ be a monomial ideal in $k[x_1,\dots,x_n]$ and $p \geq 2$ be a prime number. Then, for every integer $i \geq 0$ there exists $h_i \geq 1$ such that either $\beta^{\Z_p}_i(I^h)=\beta^{\Q}_i(I^h)$ for every $h\geq h_i$ or $\beta^{\Z_p}_i(I^h) \neq \beta^{\Q}_i(I^h)$ for every $h\geq h_i$.
\end{proposition}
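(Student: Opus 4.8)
The plan is to reduce the statement to the existence of the Kodiyalam polynomials together with the elementary fact that two univariate polynomials which agree at infinitely many integers are identical. First I would apply Kodiyalam's theorem \cite{K93} separately over the two fields $\Q$ and $\Z_p$: since both are fields, the theorem applies verbatim to the homogeneous ideal generated by $G(I)$ in each of $\Q[x_1,\dots,x_n]$ and $\Z_p[x_1,\dots,x_n]$. With the index conventions of the excerpt, this yields the polynomials $\mathfrak P^{\Q}_{i+1}(I)$ and $\mathfrak P^{\Z_p}_{i+1}(I)$ together with an integer $h_0 \geq 1$ such that $\beta^{\Q}_i(I^h) = \mathfrak P^{\Q}_{i+1}(I)(h)$ and $\beta^{\Z_p}_i(I^h) = \mathfrak P^{\Z_p}_{i+1}(I)(h)$ hold simultaneously for all $h \geq h_0$; here one simply takes $h_0$ to be the maximum of the two thresholds coming from Kodiyalam's theorem.

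Next I would distinguish two cases according to whether the polynomials $Q(h) := \mathfrak P^{\Q}_{i+1}(I)(h)$ and $P(h) := \mathfrak P^{\Z_p}_{i+1}(I)(h)$ coincide. If $P = Q$ as polynomials, then $\beta^{\Z_p}_i(I^h) = \beta^{\Q}_i(I^h)$ for every $h \geq h_0$, and we are in the first alternative with $h_i = h_0$. If $P \neq Q$, then $P - Q$ is a nonzero polynomial and hence has only finitely many roots; choosing $N$ strictly larger than all of them and setting $h_i := \max\{h_0,\, N\}$, we obtain $\beta^{\Z_p}_i(I^h) = P(h) \neq Q(h) = \beta^{\Q}_i(I^h)$ for all $h \geq h_i$, which is the second alternative.

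There is essentially no serious obstacle here; the only points requiring a moment of care are that Kodiyalam's theorem is a statement valid over an arbitrary field, so it is legitimate to invoke it for the same set of monomial generators over both $\Q$ and $\Z_p$, and that the two asymptotic identities must be made to hold at the same threshold $h_0$. The case $i = 0$ is in any event trivial, since $\beta_0(I^h)$ is the number of minimal monomial generators of $I^h$ and is independent of the field. I would also note that the Universal Coefficient inequality \eqref{Eq.universalCoefficientTheorem}, summed over multidegrees, is not needed for the proposition itself, but it refines the second alternative to $\beta^{\Q}_i(I^h) < \beta^{\Z_p}_i(I^h)$ for all $h \geq h_i$.
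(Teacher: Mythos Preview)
Your proof is correct and follows essentially the same approach as the paper: invoke Kodiyalam's theorem over each field to obtain the polynomials $\mathfrak P^{\Q}_{i+1}(I)$ and $\mathfrak P^{\Z_p}_{i+1}(I)$, and then split into cases according to whether these polynomials coincide, using that a nonzero polynomial has finitely many roots. Your write-up is in fact more explicit about the choice of the threshold $h_i$ than the paper's, and your additional remarks on the case $i=0$ and the refinement via \eqref{Eq.universalCoefficientTheorem} are correct but not needed for the argument.
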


\begin{proof}
If the Kodiyalam polynomials $\mf P_{i+1}^{\Z_p}(I)(h)$ and $\mf P_{i+1}^{\Q}(I)(h)$ are equal, then clearly\break $\beta_i^{\Z_p}(I^h)=\beta_i^{\Q}(I^h)$ for every $h\gg 0$. Otherwise, since the polynomial $\mf P_{i+1}^{\Z_p(I)} - \mf P_{i+1}^{\Q}(I)$ has a finite number of roots, the equality $\beta_i^{\Z_p}(I^h) = \beta_i^{\Q}(I^h)$ holds only for a finite number of integers $h$, and hence $\beta_i^{\Z_p}(I^h)\neq\beta_i^{\Q}(I^h)$ for $h \gg 0$.
\end{proof}

The proof of Proposition \ref{P.KodiyalamPolynomials} works more in general if $I$ is a homogeneous ideal generated by polynomials with integer coefficients which allows one to consider the ideal in the respective polynomial ring over 
any field. Particularly interesting is the case when the coefficients 
are $\pm 1$. For instance, this is the case of binomial edge ideals that we consider in Section \ref{bei}. 

\smallskip

On the other hand, the behaviour of the first few powers of a monomial ideal seems hard to control. For example, let $\Delta$ be the unique (up to simplicial isomorphism) $6$-vertex triangulation $\Delta$ of the real projective plane $\mathbb{RP}^2$ \cite{R76}. Then, the Stanley-Reisner ideal of $\Delta$ is 
\begin{equation}\tag{$\ast$}\label{Eq.ProjectivePlane}
I_\Delta \!=\! (x_1 x_2 x_3,\! x_1 x_2 x_4, x_1 x_3 x_5, x_1 x_4 x_6, x_1 x_5 x_6, x_2 x_3 x_6, x_2 x_4 x_5, x_2 x_5 x_6, x_3 x_4 x_5, x_3 x_4 x_6)
\end{equation}
and its Betti numbers differ over $\Q$ and over $\Z_2$. However, one can check with \textit{Macaulay2} \cite{M2} that this is not the case for $I_{\Delta}^h$ with $h=2,\dots,10$.

There are also cases in which the dependence appears in the second power, even though the resolution of the original ideal does not depend on the field.

\begin{example}\label{E.edgeIdealWithCharDepSquare}
Recall that the \textit{edge ideal} of a graph $G$ is defined by $I(G) = (x_i x_j : \{i,j\} \in E(G))$. Let us consider the graph $G$ whose edge ideal is
\begin{align*}
I(G)=&\ (x_1x_2,x_2x_3,x_2x_4,x_2x_5,x_2x_6,x_2x_{12},x_1x_4,x_1x_6,x_1x_7,x_1x_8,x_2x_{12},x_3x_5, \\
&\ \ x_3x_8, x_3x_{11}, x_3x_{12}, x_4x_5, x_4x_9, x_4x_{10}, x_5x_7, x_5x_9, x_6x_7, x_6x_{10}, x_6x_{11}, x_7x_8, \\
&\ \ x_7x_9, x_7x_{12}, x_8x_{11}, x_9x_{10}, x_9x_{12}, x_{10}x_{11}, x_{10}x_{12}, x_{11}x_{12}).
\end{align*}
Computations with \textit{Macaulay2} show that the Betti numbers of $I(G)$ and of $I(G)^3$ are the same over $\mathbb{Q}$ and over $\Z_2$, whereas $\beta^{\Z_2}_5(I(G)^2) \neq \beta^{\mathbb{Q}}_5(I(G)^2)$. This example also shows that the $5$-th Betti number of the square of an edge ideal may depend on the characteristic of the field. As a consequence there is
no extension of a result by Katzman to powers. The result states that the first six Betti numbers of an edge ideal are characteristic-independent, see \cite[Theorem 3.4 and Corollary 4.2]{K06}. 
\end{example}

\subsection{The Stanley-Reisner ideal of the Klein bottle}

We now show that the Stanley-Reisner ideal of the vertex-minimal triangulation of the Klein bottle in Figure \ref{F.KleinBottle} (see also the top left triangulation in \cite[Figure 18]{C94}) has characteristic-dependent Betti numbers in all powers.

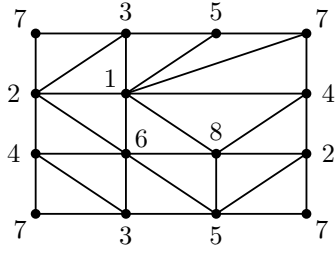
\begin{figure}[ht!]
\centering
\begin{tikzpicture}[scale=0.8]
\node[label={below left:{\small $7$}}] (a) at (0,0) {};
\node[label={below:{\small $3$}}] (b) at (1.5,0) {};
\node[label={below:{\small $5$}}] (c) at (3,0) {};
\node[label={below right:{\small $7$}}] (d) at (4.5,0) {};
\node[label={left:{\small $4$}}] (e) at (0,1) {};
\node[label={above right:{\small $6$}}] (f) at (1.5,1) {};
\node[label={above:{\small $8$}}] (g) at (3,1) {};
\node[label={right:{\small $2$}}] (h) at (4.5,1) {};
\node[label={left:{\small $2$}}] (i) at (0,2) {};
\node[label={above left:{\small $1$}}] (j) at (1.5,2) {};
\node[label={right:{\small $4$}}] (k) at (4.5,2) {};
\node[label={above left:{\small $7$}}] (l) at (0,3) {};
\node[label={above:{\small $3$}}] (m) at (1.5,3) {};
\node[label={above:{\small $5$}}] (n) at (3,3) {};
\node[label={above right:{\small $7$}}] (o) at (4.5,3) {};
\draw (0,0) -- (4.5,0) -- (4.5,3) -- (0,3) -- (0,0);
\draw (0,1) -- (4.5,1) -- (3,0) -- (1.5,1) -- (1.5,0) -- (0,1);
\draw (3,0) -- (3,1) -- (4.5,2) -- (1.5,2) -- (1.5,1) -- (0,2) -- (1.5,2) -- (3,1);
\draw (0,2) -- (1.5,3) -- (1.5,2) -- (3,3);
\draw (1.5,2) -- (4.5,3);
\end{tikzpicture}
\caption{A vertex-minimal triangulation of the Klein Bottle}\label{F.KleinBottle}
\end{figure}

\begin{theorem} \label{T.KleinBottle}
Let $I = (x_3x_8, x_4x_5, x_6x_7, x_7x_8, x_1x_2x_4, x_1x_3x_4, x_2x_3x_4, x_1x_2x_5, x_2x_3x_5, \break x_1x_4x_6, x_1x_5x_6, x_2x_5x_6, x_1x_2x_7, x_1x_3x_7, x_2x_4x_7, x_3x_5x_7,  x_1x_2x_8, x_1x_5x_8, x_2x_6x_8, x_1x_3x_6, \break x_2x_3x_6, x_4x_6x_8)$ in $k[x_1,\dots,x_8]$ be the Stanley-Reisner ideal of the triangulation of the Klein bottle in Figure \ref{F.KleinBottle}. Then, the Betti numbers of $I^h$ depend on the field for every $h \geq 1$.
\end{theorem}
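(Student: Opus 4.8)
The plan is to exploit the lcm-lattice description of multigraded Betti numbers together with the fact that the Klein bottle has characteristic-$2$ torsion in $H_1$. First I would treat the base case $h=1$: the ideal $I$ is the Stanley-Reisner ideal of the triangulated Klein bottle $\Delta$, so by Hochster's formula $\beta_{i,\sigma}^k(I_\Delta)$ is governed by $\widetilde{H}_{\bullet}$ of links/induced subcomplexes, and since $\widetilde{H}_1(\Delta;\Z)=\Z\oplus\Z_2$ one gets a graded Betti number (in the top multidegree $x_1\cdots x_8$, detecting $\widetilde{H}_1(\Delta;k)$) that jumps between $k=\Q$ and $k=\Z_2$. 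This can be recorded as a single distinguished multidegree $m_0 = x_1x_2\cdots x_8$ and homological index, say $i_0$, with $\beta_{i_0,m_0}^{\Q}(I)\neq\beta_{i_0,m_0}^{\Z_2}(I)$.

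The core of the argument is to propagate this to all $h\geq 2$. The strategy is the standard one for lcm-lattices of powers: for a monomial ideal $I$ with generators $u_1,\dots,u_s$, the generators of $I^h$ are the (distinct) products $u_{j_1}\cdots u_{j_h}$, and I would identify inside $L_{I^h}$ a sublattice isomorphic to $L_I$. Concretely, fix a generator $v$ of $I$ supported on variables disjoint from (or ``generic enough'' relative to) the variables appearing in the critical part of $L_I$; then multiplying every generator of $I$ by $v^{h-1}$ embeds $G(I)$ into $G(I^h)$, and the interval in $L_{I^h}$ below $v^{h-1}m_0$ is isomorphic as a poset to the interval below $m_0$ in $L_I$. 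By \cite[Theorem 2.1]{GPW99} the multigraded Betti number $\beta_{i_0+?,\,v^{h-1}m_0}^k(I^h)$ equals $\dim_k\widetilde{H}_{i_0-1}((1,m_0)_{L_I};k)$, which by the base case differs over $\Q$ and $\Z_2$. Hence a multigraded — and therefore a total — Betti number of $I^h$ is characteristic dependent. I expect I will actually want to read off the shift in homological degree directly from the order-complex interval rather than tracking it combinatorially.

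The main obstacle is the lattice-theoretic bookkeeping: I must verify that such an isomorphic-interval really occurs, i.e.\ that the chosen multidegree $v^{h-1}m_0$ is genuinely an element of $L_{I^h}$ (a least common multiple of generators of $I^h$), that no \emph{extra} generators of $I^h$ divide it and sneak into the interval $(1,v^{h-1}m_0)_{L_{I^h}}$ below it (which would change the order complex and possibly kill the torsion), and that the open interval is poset-isomorphic — not merely order-preservingly related — to $(1,m_0)_{L_I}$. Choosing $v$ to be one of the edge generators like $x_3x_8$ and checking the support conditions against the triangle generators should make the first two points routine; the poset isomorphism then follows because lcm's involving $v^{h-1}$ are in bijection with lcm's in $L_I$. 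A clean alternative, if the support bookkeeping is delicate, is to invoke the lcm-lattice functoriality result that Betti numbers depend only on the lattice and a suitable surjection of lcm-lattices, and to note that $L_I$ appears as an interval (equivalently, a localization) of $L_{I^h}$; I would phrase the final write-up in whichever of these two ways is shortest, but both reduce the statement for all $h$ to the single homology computation $\widetilde{H}_1(\Delta;\Q)\not\cong\widetilde{H}_1(\Delta;\Z_2)$ of the Klein bottle.
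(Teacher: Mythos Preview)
Your high-level strategy matches the paper's: pick, for each $h$, a multidegree $m_h$ in $L_{I^h}$ so that the open interval $(1,m_h)_{L_{I^h}}$ has the same characteristic-$2$ torsion as the Klein bottle, and then conclude via \cite[Theorem 2.1]{GPW99} and the inequality \eqref{Eq.universalCoefficientTheorem}. The gap is in the step you call ``routine.'' Since $m_0=x_1\cdots x_8$ is supported on all eight variables, there is no generator $v\in G(I)$ with support disjoint from it, and the hoped-for poset isomorphism between $(1,v^{h-1}m_0)_{L_{I^h}}$ and $(1,m_0)_{L_I}$ fails for every choice of $v$. Concretely, with your suggested $v=x_3x_8$ and $h\geq 2$, the monomial
\[
(x_3x_8)^{h-2}\cdot x_1x_3x_4\cdot x_2x_3x_5 \;=\; x_1x_2x_3^{\,h}x_4x_5x_8^{\,h-2}
\]
is a minimal generator of $I^h$ (one checks that no product of $h$ generators of $I$ of smaller total degree divides it), it divides $v^{h-1}m_0=x_1x_2x_3^{\,h}x_4x_5x_6x_7x_8^{\,h}$, and it is \emph{not} of the form $(x_3x_8)^{h-1}u$ with $u\in G(I)$. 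So the interval below $v^{h-1}m_0$ has strictly more atoms than the interval below $m_0$ in $L_I$, and the claimed poset isomorphism is false. The same phenomenon occurs for the other quadratic generators, including $v=x_4x_5$.

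The paper confronts exactly this issue. It takes $m_h=x_1x_2x_3x_4^{\,h}x_5^{\,h}x_6x_7x_8$ (so effectively $v=x_4x_5$), sets $J_h=(m\in G(I^h):m\mid m_h)$, and proves not that $J_h=(x_4x_5)^{h-1}I$ but the weaker stabilization $J_h=x_4x_5\,J_{h-1}$ for $h\geq 4$, hence $J_h=(x_4x_5)^{h-3}J_3$. The combinatorial argument for this uses a pigeonhole count on the degrees in $x_1,x_2,x_3,x_6,x_7,x_8$ to force both $x_4$ and $x_5$ to divide any $m\in G(J_h)$ once $h\geq 4$; it is short but not automatic. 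The cases $h=2,3$ are then handled by direct computation (via polarization and Alexander duality in Macaulay2), and the homology of the resulting complex $\Gamma_h$ is identified with that of the Alexander dual of the Klein bottle triangulation. In other words, the interval one actually needs is isomorphic to the one for $J_3$, not for $I$, and the base of the induction lives at $h=3$ rather than $h=1$. Your outline is missing this stabilization step and the separate treatment of small $h$; without them the argument does not close.
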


\begin{proof}
It is a simple consequence of the fact that the simplicial homology of
any triangulation of the Klein bottle depends on the characteristic of the coefficient field and Hochster's formula \cite{H77} that the Betti numbers of $I$ depend on the field. In particular, also easily checked using \textit{Macaulay2}, one can verify that $\beta_4^{\Z_2}(I) \neq \beta_4^{\Q}(I)$ and $\beta_5^{\Z_2}(I) \neq \beta_5^{\Q}(I)$.

For every $h \geq 2$, we first show that $\beta_{4,\alpha_h}^{\Z_2}(I) \neq \beta_{4,\alpha_h}^{\Q}(I)$ and $\beta_{5,\alpha_h}^{\Z_2}(I) \neq \beta_{5,\alpha_h}^{\Q}(I)$, where $\alpha_h=(1,1,1,h,h,1,1,1) \in \mathbb N^8$. In order to do this, we define the ideal
\[
J_h=(m \in G(I^h) : m \text{ divides } m_h ),
\]
where $G(I^h)$ is the minimal set of generators of $I^h$ and $m_h=\mathbf{x}^{\alpha_h}=x_1x_2x_3x_4^hx_5^hx_6x_7x_8$.

\textbf{Claim 1.} For every $h \geq 4$,
\[
J_h=x_4x_5 J_{h-1} = (x_4x_5)^{h-3} J_3.
\]
The inclusion $x_4x_5 J_{h-1} \subseteq J_h$ is clear. Conversely, let $m \in G(J_h)$, then $m$ divides $m_h$ and $m=u_1 \cdots u_h$, where $u_i \in G(I)$. Since $\deg(u_i) \geq 2$, it follows that $\deg(m) \geq 2h$. Moreover, $\deg_m(x_i) \leq 1$ for every $i \in \{1,2,3,6,7,8\}$, and hence $x_4^a x_5^b$ divides $m$, with $a+b \geq 2$ (since $\deg(m) \geq 2h \geq 8$). We want to show that $a,b \geq 1$. Assume that $x_5$ does not divide $m$. Thus $a \geq 2$, i.e., $\deg_m(x_4) \geq 2$. Since $u_i \neq x_4x_5$ for every $i$ and $x_4x_5$ is the only generator of $I$ with degree $2$ and divisible by $x_4$, we may assume that $u_1=x_{i_1}x_{i_2}x_4$ and $u_2=x_{i_3}x_{i_4}x_4$, where the indices $i_1,i_2,i_3,i_4$ are pairwise distinct and different from $4$ and $5$. Now, $\deg(u_i) \geq 2$ for every $i=3,\dots,h$. Since $\deg_m(x_i) \leq 1$ for every $i \in \{1,2,3,6,7,8\}$, it follows that $m$ is divisible by at least $2h+1 \geq 9$ pairwise distinct variables, a contradiction. Hence, both $x_4$ and $x_5$ divide $m$. Finally notice that $\frac{m}{x_4x_5} \in J_{h-1}$.

Now consider the polarization $\mathrm{pol}(J_h)$ of $J_h$ in the polynomial ring \break $ k[x_1,\dots,x_8,y_1,\dots,y_{h-1},z_1,\dots,z_{h-1}]$ and the simplicial complex $\Delta_h$ whose Stanley-Reisner ideal is $\mathrm{pol}(J_h)$. Let $\Gamma_h = \Delta_h^\ast$ be the Alexander dual of $\Delta_h$.

\textbf{Claim 2.} The (reduced) homology of $\Gamma_h$ equals the homology of the dual of the triangulation of Figure \ref{F.KleinBottle}. In particular, for every $h \geq 2$,
\begin{gather*}
\widetilde H_3(\Gamma_h,\mathbb Z_2) = \mathbb Z_2, \widetilde H_4(\Gamma_h,\mathbb Z_2) = (\mathbb Z_2)^2, \text{ while}\\
\widetilde H_3(\Gamma_h,\Q) = 0, \widetilde H_4(\Gamma_h,\Q) = \Q.
\end{gather*}

For $h=2,3$, the claim follows by direct computations with \textit{Macaulay2}, while for $h \geq 4$, it follows from Claim 1.

Let us denote by $L_{I^h}$ the lcm-lattice of $I^h$. By \cite[Proposition 2.3]{GPW99}, the lcm-lattice is preserved under polarization and the interval $(1, m_h)_{L_{I^h}}$ is homotopy equivalent to $\Gamma_h$. From \cite[Theorem 2.1]{GPW99}, it then follows that
\[
\beta_{4,\alpha_h}^k(I^h)=\rk \widetilde H_3 (\Gamma_h;k) \text{ and } \beta_{5,\alpha_h}^k(I^h)=\rk \widetilde H_4 (\Gamma_h;k),
\]
hence they are different if $k = \Z_2$ and $k = \Q$.

As a consequence of inequality \eqref{Eq.universalCoefficientTheorem}, 
for every multidegree $\varepsilon \in \mathbb N^8$, $\beta_{4,\varepsilon}^{\Q}(I^h) \leq \beta_{4,\varepsilon}^{\Z_2}(I^h)$ and $\beta_{5,\varepsilon}^{\Q}(I^h) \leq \beta_{5,\varepsilon}^{\Z_2}(I^h)$. In particular, this implies that $\beta_4^{\Z_2}(I^h) \neq \beta_4^{\Q}(I^h)$ and $\beta_5^{\Z_2}(I^h) \neq \beta_5^{\Q}(I^h)$.
\end{proof}

There are six combinatorially distinct $8$-vertex triangulations of the Klein bottle, see \cite[Figure 18]{C94}. By using \textit{Macaulay2}, one can check that for four of these triangulations $\Delta$ the Betti numbers of powers $I_{\Delta}^h$ for small $h \geq 2$ do not depend on the field, while for the other two this is not the case. It follows that the dependence of the Betti numbers of the powers of a monomial ideal is not a topological property, i.e, does not depend only on the homeomorphism type of the simplicial complex. Indeed this example shows that the dependence is influenced by the combinatorics of the triangulation, which in turn governs the divisibility between the generators of the powers.

\subsection{Kimura, Terai, and Yoshida's ideal}\label{S.KimuraTeraiYoshida}

In \cite[Section 6]{KTY09}, Kimura, Terai, and Yoshida consider the following ideal in $k[x_1,\dots,x_{10}]$:
\begin{equation*}
A = (x_1x_2x_8x_9x_{10}, x_2x_3x_4x_5x_{10}, x_5x_6x_7x_8x_{10}, x_1x_4x_5x_6x_9, x_1x_2x_3x_6x_7, x_3x_4x_7x_8x_9).
\end{equation*}
This ideal has $6$ generators of the same degree in $10$ variables and can be obtained from the projective plane according to the construction in \cite[page 76]{KTY09}. Using an argument similar to the one in the proof of Theorem \ref{T.KleinBottle}, one can show that some Betti numbers of $A^h$ depend on the field for every $h \geq 1$. In particular, the multigraded Betti numbers $\beta_{2,\alpha_h}(A^h)$ and $\beta_{3,\alpha_h}(A^h)$ are different over $\Q$ and over $\Z_2$, where $\alpha_h = (h, h, 1, 1, 1, 1, 1, h, h, h) \in \N^{10}$. 

Clearly, for a monomial ideal the zero-th Betti number does not depend on the field and the same holds for the first Betti number, see \cite[Corollary 5.3]{BH95}. However, $\beta_2^{\Z_2}(A^h) \neq \beta_2^{\Q}(A^h)$ and $\beta_3^{\Z_2}(A^h) \neq \beta_3^{\Q}(A^h)$ for every $h \geq 1$. In particular, the Kodiyalam polynomials $\mathfrak P_3^k(A)$ and $\mathfrak P_4^k(A)$ depend on the field.

\subsection{Castelnuovo-Mumford regularity}

By \cite{CHT99} and \cite{K93}, given a homogeneous ideal $I$, the Castelnuovo-Mumford regularity of $I^h$ is asymptotically a linear function in $h$.
Denote by $s_k(I)=\min \{s: \reg_k(I^h)=a_k h+b_k, \text{ for all } h \geq s\}$ the \textit{index of stability of $I$ with respect to $k$}.
In order to prove the next result, we recall the following:

\begin{theorem}[{\cite[Theorem 5.6]{HTT15}}]\label{HTT}
Let $I \subseteq k[x_1,\ldots,x_n]$ and $J \subseteq k[y_1,\ldots,y_r]$ be homogeneous ideals in polynomial rings on disjoint sets of variables such that $\reg_k(I^h)=ah+b$ and $\reg_k(J^h)=ch+d$, for $h \gg 0$. If $c>a$, then
\[
\reg_k((I+J)^h)=c(h+1)+d+\max_{j \leq s_k(I)}\{\reg_k(I^j)-cj\}-1, \text{   for  } h \gg 0.
\]
\end{theorem}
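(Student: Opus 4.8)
The plan is to first establish an exact formula for $\reg_k((I+J)^h)$ valid for every $h\ge 1$ and depending only on the numbers $\reg_k(I^i)$ and $\reg_k(J^j)$, and then to read off the stated closed form by optimizing over the index, using the hypothesis $c>a$. Throughout, write $R=S\otimes_k T$ with $S=k[x_1,\dots,x_n]$ and $T=k[y_1,\dots,y_r]$; the eventual linearity of $\reg_k(I^h)$ and $\reg_k(J^h)$ is exactly the input supplied by \cite{CHT99,K93}. Three facts about disjoint variable sets drive the computation. First, a K\"unneth argument: tensoring the minimal resolutions over the field $k$ shows that for a finitely generated graded $S$-module $M$ and $T$-module $N$ one has $\reg_R(M\otimes_k N)=\reg_S(M)+\reg_T(N)$. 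Second, multiplication gives an isomorphism $I^a\otimes_k J^b\xrightarrow{\ \sim\ }I^aJ^b$ of graded $R$-modules, since a $k$-basis of $I^aJ^b$ is the product of bases of $I^a$ and $J^b$. Third, from this tensor decomposition one gets the intersection rule $I^aJ^b\cap I^{a'}J^{b'}=I^{\max(a,a')}J^{\max(b,b')}$.

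With these in hand I would study the descending filtration
\[
(I+J)^h=Q_0\supseteq Q_1\supseteq\cdots\supseteq Q_h=I^h\supseteq Q_{h+1}=0,\qquad Q_i=I^i(I+J)^{h-i}.
\]
Using $Q_i=I^iJ^{h-i}+Q_{i+1}$ together with the intersection rule, one identifies the layers as $Q_i/Q_{i+1}\cong I^iJ^{h-i}/I^{i+1}J^{h-i}\cong (I^i/I^{i+1})\otimes_k J^{h-i}$ for $0\le i<h$, while $Q_h/Q_{h+1}=I^h$. The K\"unneth formula then computes $\reg_R(Q_i/Q_{i+1})=\reg_S(I^i/I^{i+1})+\reg_T(J^{h-i})$, and the standard regularity estimates along the short exact sequences of the filtration bound $\reg_k((I+J)^h)$ by the maximum of these layer regularities. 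Rewriting $\reg_S(I^i/I^{i+1})$ through $0\to I^{i+1}\to I^i\to I^i/I^{i+1}\to 0$ and reindexing converts this into the expression below. The essential point is to prove that this bound is attained, which is where disjointness is indispensable: the extremal syzygy of a dominant layer lives in a multidegree that is a pure product of an $x$-part and a $y$-part, and no other layer contributes in that multidegree, so it cannot cancel in the long exact sequences. This yields, for every $h\ge 1$, the exact formula
\[
\reg_k((I+J)^h)=\max\{\reg_k(I^i)+\reg_k(J^j)\ :\ i+j=h+1,\ i,j\ge 1\}-1.
\]

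Granting this, the theorem is a piecewise linear optimization for $h\gg 0$. Put $j=h+1-i$ and split the range of $i$. In the bulk regime $i\ge s_k(I)$ and $j\ge s_k(J)$ the summand is $\reg_k(I^i)+\reg_k(J^j)=(a-c)i+c(h+1)+b+d$, which is strictly decreasing in $i$ because $a<c$, hence maximized at $i=s_k(I)$. In the transient regime $i<s_k(I)$ (then $j=h+1-i\ge s_k(J)$ automatically once $h$ is large) the summand equals $c(h+1)+d+(\reg_k(I^i)-ci)$. Finally, when $i$ is so large that $j<s_k(J)$, the summand is at most $ah+b+\max_{j<s_k(J)}\reg_k(J^j)=ah+O(1)$, which is dominated by the bulk value of order $ch$ once $h\gg 0$, again because $c>a$. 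Merging the first two regimes shows that the maximum equals $c(h+1)+d+\max_{1\le i\le s_k(I)}\{\reg_k(I^i)-ci\}$, and substituting into the exact formula produces precisely
\[
\reg_k((I+J)^h)=c(h+1)+d+\max_{j\le s_k(I)}\{\reg_k(I^j)-cj\}-1,\qquad h\gg 0.
\]

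The main obstacle is the tightness of the filtration bound that produces the exact formula: regularity is only subadditive along filtrations, so one must exploit the product structure of multidegrees in disjoint variables to certify that the top syzygy of the dominant layer survives into $(I+J)^h$, rather than being killed by a differential. Once the exact formula is secured, the remaining work is the elementary optimization above, whose only genuine ingredient is the strict inequality $c>a$: it simultaneously fixes the asymptotic slope at $c$ and forces the maximizing index to lie in the transient range $i\le s_k(I)$, which is exactly what the stated maximum records.
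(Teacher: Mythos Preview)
The paper does not prove this theorem; it is simply quoted from \cite[Theorem~5.6]{HTT15} and used as a black box in the proof of Proposition~\ref{P.asymptoticRegularity}. There is therefore no in-paper argument to compare your attempt against.

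For what it is worth, your outline follows the strategy of the source \cite{HTT15}: filter $(I+J)^h$ by the ideals $Q_i=I^i(I+J)^{h-i}$, identify the successive quotients through the tensor structure afforded by disjoint variables, bound the regularity by the maximum over the layers, and then optimize. Your optimization under the hypothesis $c>a$ is carried out correctly and yields the stated closed form. The point you single out as the main obstacle---that the filtration bound is actually attained---is indeed the crux in \cite{HTT15}; there the exact identity is formulated in terms of the quantities $\reg(I^i/I^{i+1})$ and $\reg(J^{h-i})$ (and their symmetric counterparts) rather than directly as $\reg(I^i)+\reg(J^j)$, and the passage between the two uses the short exact sequence $0\to I^{i+1}\to I^i\to I^i/I^{i+1}\to 0$ together with a separate argument that no drop occurs. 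Your multidegree-separation heuristic is the right intuition for why the extremal syzygy survives, but turning it into a proof requires the more careful bookkeeping done in \cite{HTT15}.
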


As a consequence, we present a simple construction which produces monomial ideals with the asymptotic regularity of powers depending on the characteristic.

\begin{proposition}\label{P.asymptoticRegularity}
Let $I \subseteq k[x_1,\dots,x_n,y]$ be a nonzero monomial ideal with generators in the variables $x_1,\dots,x_n$. Suppose that there exists another field $k'$ such that $\reg_{k}(I) \neq \reg_{k'}(I)$. Then, there exists $c \in \mathbb{N}$ such that $\reg_{k} ((I+(y^c))^h) \neq \reg_{k'} ((I+(y^c))^h)$, for $h \gg 0$.
\end{proposition}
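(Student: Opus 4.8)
The plan is to apply Theorem~\ref{HTT} (Hà--Trung--Trung) with $J = (y^c)$ for a suitably chosen exponent $c$, exploiting the fact that the principal ideal $(y^c)$ has completely field-independent, explicitly computable regularity of powers. Concretely, $(y^c)^h = (y^{ch})$ is generated by a single monomial of degree $ch$, so $\reg_k((y^c)^h) = ch - 1$ for every field $k$; thus in the notation of Theorem~\ref{HTT} we have $c_{\mathrm{param}} = c$ and $d = -1$, independent of $k$. Meanwhile, by \cite{CHT99, K93}, $\reg_k(I^h) = a_k h + b_k$ for $h \gg 0$, where $a_k$ and $b_k$ may depend on $k$; note $a_k$ is bounded (e.g.\ by the maximal degree of a generator of $I$), so we may pick a single integer $c$ with $c > a_k$ and $c > a_{k'}$ simultaneously. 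With this choice, Theorem~\ref{HTT} applies to both $k$ and $k'$ (the hypothesis $c > a$ is met in each case), giving for $h \gg 0$
\[
\reg_k((I+(y^c))^h) = c(h+1) - 1 + \max_{j \le s_k(I)}\{\reg_k(I^j) - cj\} - 1,
\]
and the analogous formula with $k$ replaced by $k'$.

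The next step is to reduce the desired inequality of asymptotic regularities to an inequality between the two ``correction terms''
\[
\mu_k := \max_{j \le s_k(I)}\{\reg_k(I^j) - cj\}, \qquad \mu_{k'} := \max_{j \le s_{k'}(I)}\{\reg_{k'}(I^j) - cj\}.
\]
Since the terms $c(h+1) - 2$ coincide for the two fields, we have $\reg_k((I+(y^c))^h) \ne \reg_{k'}((I+(y^c))^h)$ for $h \gg 0$ if and only if $\mu_k \ne \mu_{k'}$. So it remains to arrange, via the choice of $c$, that $\mu_k \ne \mu_{k'}$, starting from the single assumed inequality $\reg_k(I) \ne \reg_{k'}(I)$ (i.e.\ the $j=1$ term differs).

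The key idea for forcing $\mu_k \ne \mu_{k'}$ is to take $c$ large enough that in each maximum the $j = 1$ term dominates. Indeed, for $j \ge 1$ and any field, $\reg_k(I^j) - cj \le \reg_k(I^j) - cj$; since $\reg_k(I^j)$ grows at most linearly in $j$ with slope $a_k < c$, the sequence $\reg_k(I^j) - cj$ is eventually strictly decreasing, and by enlarging $c$ one can ensure $\reg_k(I^j) - cj < \reg_k(I) - c$ for all $2 \le j \le s_k(I)$ (there are only finitely many such $j$, and each inequality $\reg_k(I^j) - \reg_k(I) < c(j-1)$ holds once $c$ is large). Doing this for both $k$ and $k'$ — again only finitely many constraints, so a common large $c$ works — yields $\mu_k = \reg_k(I) - c$ and $\mu_{k'} = \reg_{k'}(I) - c$, whence $\mu_k \ne \mu_{k'}$ by hypothesis. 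Plugging back in, $\reg_k((I+(y^c))^h) = ch + 2c - 2 + \reg_k(I)$ and $\reg_{k'}((I+(y^c))^h) = ch + 2c - 2 + \reg_{k'}(I)$ differ for $h \gg 0$, as desired.

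The main obstacle is verifying the hypotheses of Theorem~\ref{HTT} cleanly and handling the indices of stability: $s_k(I)$ and $s_{k'}(I)$ are a priori different and not explicitly controlled, so one must phrase the choice of $c$ so that it dominates the $j=2,\dots$ terms for \emph{both} fields up to their respective stability indices. A minor point to check is that $I + (y^c)$ is nonzero and that $I$, being nonzero with generators only in the $x_i$, really does sit in a polynomial ring on variables disjoint from $y$, so that Theorem~\ref{HTT} is literally applicable with $J = (y^c) \subseteq k[y]$; this is immediate from the hypotheses. One should also note that $a_k \ge 1$ and the degrees are positive, so large $c$ indeed exceeds $a_k$ — there is no degenerate case where $I^h$ has regularity growing as fast as any $c$.
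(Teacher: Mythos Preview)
Your approach is essentially identical to the paper's: pick $c$ large enough that Theorem~\ref{HTT} applies over both fields and the correction maximum $\max_{j\le s_k(I)}\{\reg_k(I^j)-cj\}$ is forced to occur at $j=1$, so that the asymptotic regularity becomes $ch+\mathrm{const}+\reg_k(I)$ and therefore differs between $k$ and $k'$; the paper makes the choice of $c$ explicit as $c=\max\{a_k+1,\,a_{k'}+1,\,\max_{j\le s_k(I)}\reg_k(I^j),\,\max_{j\le s_{k'}(I)}\reg_{k'}(I^j)\}$, while you argue it exists by finiteness, which is equally valid. One minor slip that does not affect the argument: $\reg_k((y^c)^h)=ch$, not $ch-1$ (the ideal $(y^{ch})$ is a free module with generator in degree $ch$, so $\beta_{0,ch}=1$ and hence $d=0$); correcting this and the ensuing arithmetic gives $\reg_k((I+(y^c))^h)=ch+\reg_k(I)-1$ rather than $ch+2c-2+\reg_k(I)$, but since the discrepancy is a field-independent constant the conclusion stands.
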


\begin{proof}
Define integers $$c_k(I)=\mathrm{max}\{a_k+1,\max_{i \leq s_k(I)}\{\reg_k(I^i)\}\}\,,\,c_{k'}(I)= \{a_{k'}+1,\max_{i \leq s_{k'}(I)}\{\reg_{k'}(I^i)\}\},$$ $$\text{ and } 
c=\max\{c_k(I),c_{k'}(I)\}.
$$
Notice, that $\displaystyle{\reg_k((y^c)^h)=\reg_{k'}((y^c)^h)=ch}$, for $h \geq 1$. Since $c>\max \{a_k,a_{k'}\}$, by Theorem \ref{HTT} we have
\[
\reg_k((I+(y^c))^h) = c(h+1) + \max_{j \leq s_k(I)}\{\reg_k(I^j)-cj\} - 1
\]
and
\[
\reg_{k'}((I+(y^c))^h) = c(h+1) + \max_{j \leq s_{k'}(I)}\{\reg_{k'}(I^j)-cj\} - 1
\]
for $h \gg 0$. We claim that $\displaystyle{\max_{j \leq s_k(I)}\{\reg_k(I^j)-cj\}=\reg_k(I)-c}$. In fact, for $2 \leq j \leq s_k(I)$ we have
\[
\reg_k(I^j)-cj \leq c_k(I)-cj \leq c(1-j) \leq -c<\reg_k(I)-c.
\]
Analogously, we get $\mathrm{max}_{j \leq s_{k'}(I)}\{\reg_{k'}(I^j)-cj\}=\reg_{k'}(I)-c$. It follows that
\[
\reg_k((I+(y^c))^h)=ch+\reg_k(I)-1 \text{\quad and \quad} \reg_{k'}((I+(y^c))^h)=ch+\reg_{k'}(I)-1
\]
for $h \gg 0$. This proves the claim.
\end{proof}

\begin{example}\label{E.KatzmanEdgeIdeal}
In \cite[Problem 7.10]{BBH19} the authors ask whether there exist edge ideals $I$ for which the asymptotic linear function $\reg_k(I^h)$, for $h \gg 0$, is characteristic-dependent. Minh and Vu \cite[Remark 5.3]{MV21} answered this question positively, showing that this is the case for the edge ideal of a graph with $18$ vertices.

In \cite[Appendix A]{K06}, Katzman found four non-isomorphic graphs with $11$ vertices whose edge ideal has characteristic-dependent resolution and proved that they are the smallest ones with this property. The edge ideal of one of them is:
\begin{align*}
I(G) = & \ (x_1x_5,x_1x_6,x_1x_8,x_1x_{10},x_2x_5,x_2x_6,x_2x_9,x_2x_{11},x_3x_7,x_3x_8,x_3x_9,x_3x_{11},x_4x_7,\\
\nonumber & \ \ x_4x_8,x_4x_{10},x_4x_{11}, x_5x_8,x_5x_9,x_6x_{10},x_6x_{11},x_7x_9,x_7x_{10},x_8x_{11})
\end{align*}
in $k[x_1,\dots,x_{11}]$. One can check with \textit{Macaulay2} that $\reg_{\Z_2}(I(G)) \neq \reg_{\Q}(I(G))$ but for $I(G)^2$ this
characteristic dependence has disappeared.

Proposition \ref{P.asymptoticRegularity} implies that the regularity of $(I(G) + (y^c))^h$ depends on the field for some $c \geq 3$ and for $h \gg 0$. However, computations with \textit{Macaulay2} show that already choosing $c=2$ produces a dependence in the regularity of the first four powers of $I(G) + (y^2)$. Polarizing $y^2$ as $x_{12}x_{13}$, the ideal $I(G) + (y^2)$ is transformed into the edge ideal $J = I(G) + (x_{12}x_{13})$, which corresponds to the disjoint union of the graph $G$ of Katzman's edge ideal and the edge $\{12,13\}$. Moreover, $\reg_k(J) = \reg_k(I(G) + (y^2))$ by \cite[Corollary 1.6.3 (c)]{HH11}.
We conjecture that $\reg_{\Q}(J^h) = 2h+1$ and $\reg_{\Z_2}(J^h) = 2h+2$ for every $h \geq 1$. If true, this would yield an edge ideal of a graph with $13$ vertices such that the regularity of all powers depends on the field (which is simpler than the graph of \cite[Remark 5.4]{MV21}).
\end{example}

\section{Spreading the characteristic dependence}

In this section, starting from a monomial ideal whose Betti numbers depend on the field, we show how to produce dependence in all powers of the ideal and in its Kodiyalam polynomials.

\subsection{Creating the dependence in all powers}
Recall that, any monomial ideal $I$ in a polynomial ring has a unique minimal system of monic monomial generators $G(I)$.

\begin{remark}\label{R.monomialTimesIdeal}
Let $I$ be a monomial ideal in $k[x_1,\dots,x_n,y_1,\dots,y_r]$, with generators in the variables $x_1,\dots,x_n$, and $w$ be a monomial of degree $d$ in the variables $y_1,\dots,y_r$. Then $\beta_{i} (wI) = \beta_{i} (I)$ for every $i \in \N$. In fact, we have 
$\beta_{i,j} (wI) = \beta_{i,j-d} (I)$, for every $i,j \in \N$. This easily follows from \cite[Theorem 2.1]{GPW99} and by observing that all elements of the lcm-lattice of $wI$ are obtained by multiplying the elements of the lcm-lattice of $I$ by $w$.
\end{remark}

\begin{lemma}\label{L.Splitting}
Let $I$ be a monomial ideal of $R=k[x_1,\dots,x_n,y_1,\dots,y_r,z]$ with generators in the variables $x_1,\dots,x_n$ and let $w$ be a monic monomial in the variables $y_1,\dots,y_r$. Fix $h \in \mathbb{N}_{>0}$. Then
\begin{enumerate}
    \item[$\mathrm{(1)}$]  $\beta_{i}((I+(w))^h)=\beta_{i}((zI+(w))^h)$, for every $i \in \mathbb{N}$.
    \item[$\mathrm{(2)}$] $(zI +(w))^h=(zI)^h+w(zI +(w))^{h-1}$ is a Betti splitting of $(zI +(w))^h$.
\end{enumerate}
\end{lemma}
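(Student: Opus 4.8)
The plan is to prove the two statements in order, since (1) is needed as a bookkeeping reduction and (2) is the substantive claim.

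\textbf{Part (1).} The point of introducing the extra variable $z$ is to make the ``$I$-part'' of the generating set and the generator $w$ live on disjoint variable sets after we pass to a power. First I would expand $(I+(w))^h = \sum_{j=0}^{h} I^{h-j}(w^j) = \sum_{j=0}^h w^j I^{h-j}$, and similarly $(zI+(w))^h = \sum_{j=0}^h w^j z^{h-j} I^{h-j}$. So the two ideals differ only by multiplying the $j$-th summand $w^j I^{h-j}$ by the pure power $z^{h-j}$. The lcm-lattice of $(zI+(w))^h$ is obtained from that of $(I+(w))^h$ by multiplying each generator $m$ coming from the summand $w^jI^{h-j}$ by $z^{h-j}$; one checks that taking least common multiples is compatible with this operation because the $z$-exponent of an lcm of several generators is the maximum of their $z$-exponents, and divisibility relations among elements of $L_{(I+(w))^h}$ are preserved (if $m \mid m'$ in $L_{(I+(w))^h}$ then the $z$-exponent assigned to $m$ is at least that assigned to $m'$, so the relation still holds after the multiplication). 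Hence the two lcm-lattices are isomorphic as posets, and by \cite[Theorem 2.1]{GPW99} the total (indeed all multigraded, up to the shift in the $z$-degree) Betti numbers agree. This is the same mechanism as Remark \ref{R.monomialTimesIdeal}, applied summand by summand; I would phrase it as ``the lcm-lattices of $(I+(w))^h$ and $(zI+(w))^h$ are isomorphic'' and cite \cite[Theorem 2.1, Proposition 2.3]{GPW99}.

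\textbf{Part (2).} Now set $J = zI$, with generators in the variables $x_1,\dots,x_n,z$, and $w$ a monomial in the $y$'s, so $G(J)$ and $\{w\}$ involve disjoint variables. We must show $(J+(w))^h = J^h + w(J+(w))^{h-1}$ is a Betti splitting. The decomposition of the generating set is immediate: $G(J^h)$ consists of the degree-$h$ products of generators of $J$ (no $y$'s), while every generator of $w(J+(w))^{h-1}$ is divisible by $w$, hence by some $y_i$, so the two generating sets are disjoint and their union is $G((J+(w))^h)$. For the splitting property, I would use the criterion of \cite[Proposition 2.1]{FHV09}: it suffices to show the maps $\operatorname{Tor}^R_i(A\cap B,k) \to \operatorname{Tor}^R_i(A,k)\oplus\operatorname{Tor}^R_i(B,k)$ vanish, where $A = J^h$ and $B = w(J+(w))^{h-1}$. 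The cleanest route is the standard lemma (from \cite{FHV09, BF20}) that if $A$ is generated in degrees/variables disjoint from those of $B$ in an appropriate sense — concretely, if $A\cap B = w\cdot(\text{something})\cap J^h$ factors so that the relevant Tor maps land in different multigraded strands — then the splitting holds. Here $A\cap B = J^h \cap w(J+(w))^{h-1}$; since no generator of $J^h$ is divisible by any $y_i$ while every generator of $B$ is, we get $A \cap B = w\big(J^h : w\big)\cap\ldots$, and more usefully $A\cap B = w\cdot\big(J^h \cap (J+(w))^{h-1}\big)$ after noting $(J^h : w) = J^h$ because $w$ is coprime to everything in $J$. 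Then $A\cap B = wJ^h \cap$ \dots; in fact $wJ^h \subseteq J^h$? No — $w J^h$ is \emph{not} contained in $J^h$ since $J^h$ has no $y$'s. Let me instead argue directly: a monomial lies in $A\cap B$ iff it is a multiple of some generator of $J^h$ and also a multiple of $w$ times a generator of $(J+(w))^{h-1}$; since generators of $J^h$ are $y$-free, such a monomial equals $w\cdot m$ where $m\in J^h$ and $w m \in B$ automatically, giving $A\cap B = wJ^h$. Wait, we also need $wm\in B=w(J+(w))^{h-1}$, i.e.\ $m\in(J+(w))^{h-1}$, which holds since $m\in J^h\subseteq J^{h-1}\subseteq (J+(w))^{h-1}$. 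Conversely $wm$ with $m\in J^h$ lies in $A$? Not necessarily — $wm\in J^h$ would require \dots but $A=J^h$ is $y$-free so $wm\notin A$. So actually $A\cap B$ consists of monomials divisible by $w$ (from $B$) that are also multiples of a generator of $J^h$; writing such a monomial as $w\cdot u$ with $u$ $y$-free forces $u\in J^h$, but then $w u\in A$ fails. Hence I should be careful: $A\cap B$ may be larger; the correct statement is $A\cap B = \{$monomials that are multiples of a $J^h$-generator \emph{and} of $w\cdot(\text{a }(J+(w))^{h-1}\text{-generator})\}$, and since a $J^h$ generator may itself be a multiple of an $x$-monomial that the $B$-generator contributes, the intersection is $w J^h$ exactly when every $B$-generator's $x,z$-part already lies in $J^h$; in general $A\cap B = \operatorname{lcm}$'s, computed as $w\cdot\big((J+(w))^{h-1}\big)'\cap J^h$ where $(\cdot)'$ strips $w$. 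I will sort this out by instead invoking the general Betti-splitting result for ``$I = \mathfrak m I' + (w)I''$-type'' decompositions; concretely, \cite[Corollary 2.7]{BF20} (or the criterion that $\operatorname{Tor}$ of the intersection splits because the intersection's generators are all divisible by $w$ while $J^h$'s are $y$-free, so in the long exact sequence the connecting-type maps are forced to be zero on each multigraded piece by degree reasons in the $y$-variables).

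\textbf{Main obstacle.} The genuine difficulty is verifying the Betti-splitting property in (2), i.e.\ that the $\operatorname{Tor}$ maps vanish; the combinatorial decomposition of $G$ and the computation of the intersection ideal $J^h\cap w(J+(w))^{h-1}$ are routine but need the disjoint-variable observation used carefully. I expect the paper handles this by a clean general lemma — most likely the fact (cf.\ \cite{FHV09, BF20}) that if the two pieces of a splitting have generators in ``mutually coprime'' variable sets except for a common multiple structure, the splitting is automatic — and then (1) is pure lcm-lattice bookkeeping. I would structure the write-up as: first dispatch (1) via the lcm-lattice isomorphism, then for (2) identify $J^h\cap w(J+(w))^{h-1} = w\big(J^{h-1}:?\big)\cap\dots$ explicitly, and finally cite the appropriate splitting criterion to conclude the $\operatorname{Tor}$ maps vanish multidegree by multidegree, the key being that every generator of the intersection is divisible by $w$ (hence by a $y$-variable) whereas $J^h$ is $y$-free.
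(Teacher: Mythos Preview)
Your proof has genuine gaps in both parts.

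\textbf{Part (1).} The lcm-lattice isomorphism you claim is false. Take $I=(x_1,x_2)$, $w=y$, $h=2$. In $L_{(I+(w))^2}$ the three pairwise lcm's $\mathrm{lcm}(x_1x_2,x_1y)$, $\mathrm{lcm}(x_1x_2,x_2y)$, $\mathrm{lcm}(x_1y,x_2y)$ all equal $x_1x_2y$, a single element. In $L_{(zI+(w))^2}$ the first two give $z^2x_1x_2y$ while the third gives $zx_1x_2y$: two distinct elements. So the second lattice is strictly larger and no poset isomorphism exists. The problem is that your ``$z$-exponent assigned to $m$'' is not well-defined on elements of $L_{(I+(w))^h}$, since the same lcm can arise from subsets with different minimal $j$. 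The paper avoids this entirely: it observes $R/((I+(w))^h+(z))\cong R/((zI+(w))^h+(z-1))$ and proves that $z-1$ is a non-zerodivisor on $R/(zI+(w))^h$ by a short leading-term argument in the variable $z$; then the standard fact that killing a regular element preserves Betti numbers gives the result.

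\textbf{Part (2).} First, your confusion about $A\cap B$ stems from conflating the ideal $J^h$ with its generating set: $J^h$ is an ideal of $R$, so of course $wm\in J^h$ for $m\in J^h$; the computation $A\cap B=wJ^h$ is immediate once this is straightened out. More seriously, you only address the vanishing of the map $\mathrm{Tor}_i(A\cap B,k)\to\mathrm{Tor}_i(A,k)$ (via the $y$-degree), and never the map into $\mathrm{Tor}_i(B,k)$. This second vanishing is exactly where the variable $z$ earns its keep, and it is the whole reason the paper introduces $z$ in the first place. The paper argues multidegree by multidegree: if $\beta_{i,m}(w(zI)^h)\neq 0$ then $m\in L_{w(zI)^h}$, so $z^h\mid m$ and the $w$-part of $m$ is exactly $w$. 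If $m$ were also in $L_{w(zI+(w))^{h-1}}$, then every generator $w^{s+1}z^t m'$ of $w(zI+(w))^{h-1}$ dividing $m$ would have $s=0$ (forced by the $w$-degree of $m$), hence $t=h-1$, so every atom below $m$ has $z$-degree $h-1$ and therefore $m$ itself has $z$-degree $\le h-1$, contradicting $z^h\mid m$. Without $z$ this separation of $L_{I^h}$ from $L_{I^{h-1}}$ fails in general, and no ``disjoint-variable'' splitting criterion from \cite{FHV09} or \cite{BF20} applies directly to the pair $(J^h,\,w(J+(w))^{h-1})$, since these do \emph{not} live on disjoint variable sets.
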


\begin{proof}
$\mathrm{(1)}$  Since $R/((I+(w))^h+(z)) \cong R/((zI+(w))^h+(z-1))$, it is enough to show that the class of $z-1$ is regular over $R/((zI+(w))^h)$. Let $f \in R$ and assume that $(z-1)f \in (zI+(w))^h$. We may also assume that there are no monomials of $f$ in $(zI+(w))^h$. In fact, if $g \in (zI+(w))^h$ is a monomial of $f$, then $(z-1)f \in (zI+(w))^h$ if and only if $(z-1)(f-g) \in (zI+(w))^h$. 
    
Now, if $f \neq 0$, regarding $f$ as a polynomial in $z$, we consider $u$ to be the term of lowest degree (possibly zero) with respect to $z$. In $(z-1)f$ the term with lowest degree with respect to $z$ is $-u$, and it does not cancel with any other term of $(z-1)f$. Since $-u \notin (zI+(w))^h$, which is a monomial ideal, this means that $(z-1)f \notin (zI+(w))^h$, a contradiction. \\ 
$\mathrm{(2)}$ Let $m$ be a multidegree in the lcm-lattice of $w(zI)^h$. We claim that 
\begin{equation}\label{Eq.BettiSplitting}
\tag{$\star$}
\text{if } \beta_{i,m}(w(zI)^h) \neq 0, \text{ then }
\beta_{i,m}((zI)^h)=\beta_{i,m}(w(zI +(w))^{h-1})=0.
\end{equation}

Suppose that $G(I) = \{m_1,\dots,m_a\}$, where $m_i$ are monomials in the variables $x_1,\dots,x_n$, and the multidegree $m$ appears in the lcm-lattice of $w(zI)^h$.

Then, $m$ is not an element of the lcm-lattice of $I^h$. In fact, $wz^h$ is a factor of $m$ since all the generators of $w(zI)^h$ have the form $wz^h m_{i_1} \cdots m_{i_h}$, where $m_{i_j} \in G(I)$. Thus, $\beta_{i,m}((zI)^h)=0$. 

Assume that $m$ appears in the lcm-lattice of $w(zI +(w))^{h-1}$. Notice that, the ideal $w(zI +(w))^{h-1}$ is generated by monomials of the form $w^{s+1} z^t m_{i_1} \cdots m_{i_t}$, with $s+t=h-1$, where $m_{i_j} \in G(I)$. Hence, the atoms of the interval $(0,m)$ in the lcm-lattice of $w(zI +(w))^{h-1}$ are such that $s=0$, i.e., are generators of $w(zI)^{h-1}$. It follows that $z^{h-1}$ is the highest power of $z$ in the factorization of $m$, a contradiction. Thus, $\beta_{i,m}(w(I +(w))^{h-1})=0$.

To prove the statement, notice that $(zI +(w))^h=(zI)^h+w(zI +(w))^{h-1}$ and $G((zI)^h) \cap G(w(zI +(w))^{h-1})=\emptyset$. Moreover, $(zI)^h \cap w(zI +(w))^{h-1} = w(zI)^h$. From \eqref{Eq.BettiSplitting} it follows that all induced maps 
\[
\mathrm{Tor}_i^R(w(zI)^h,k)_m \rightarrow \mathrm{Tor}_i^R((zI)^h,k)_m \oplus \mathrm{Tor}_i^R( w(zI +(w))^{h-1},k)_m
\]
are zero, for every $i \in \N$ and every multidegree $m$.
\end{proof}

Starting from an ideal $I$ such that the Betti numbers of some power $I^h$ depend on the field, we add a monic monomial on new variables obtaining an ideal $J$ with the same property in all powers $J^q$ with $q \geq h$. This happens even if the higher powers of the original ideal $I$ have characteristic independent Betti numbers.

\begin{theorem}\label{T.Formula}
Let $I$ be a monomial ideal in $k[x_1,\dots,x_n,y_1,\dots,y_r]$, with generators in the variables $x_1,\dots,x_n$. Let $w$ be a monic monomial in the variables $y_1,\dots,y_r$ and fix $h \in \mathbb{N}_{>0}$. Then 
\begin{gather*}
\beta_{0}((I +(w))^h)=\sum_{\ell=1}^h \beta_{0}(I^{\ell})+1, \text{ and} \\
\beta_{i}((I +(w))^h)=\sum_{\ell=1}^h \left[\beta_{i}(I^{\ell})+\beta_{i-1}(I^{\ell}), \right] \text{    for every   } i \in \mathbb{N}_{>0}. \end{gather*}
In particular, if $\beta_i^{\Z_p}(I^h) \neq \beta_i^\Q(I^h)$ for some prime number $p$ and $i \geq 1$, then for every $q \geq h$
\[
\beta_i^{\Z_p}((I +(w)^q) \neq \beta_i^\Q((I +(w))^q).
\]
\end{theorem}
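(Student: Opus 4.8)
The plan is to derive both displayed formulas by induction on $h$, using the Betti splitting machinery from Lemma \ref{L.Splitting}, and then read off the characteristic dependence statement as an immediate consequence. First I would reduce to the case where a dummy variable $z$ is available: by Lemma \ref{L.Splitting}(1), $\beta_i((I+(w))^h) = \beta_i((zI+(w))^h)$ for all $i$, so it suffices to prove the formulas for $zI$ in place of $I$, working in $k[x_1,\dots,x_n,y_1,\dots,y_r,z]$. Note that $\beta_i((zI)^\ell) = \beta_i(z^\ell I^\ell) = \beta_i(I^\ell)$ by Remark \ref{R.monomialTimesIdeal}, so once the formula is proved with $zI$ it transfers back to $I$ verbatim.

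Next I would run the induction. The base case $h=1$ is the familiar Betti splitting $I+(w)$ (or rather $zI+(w)$): since $w$ and the generators of $zI$ involve disjoint variables, $zI \cap (w) = w(zI)$, and $\beta_i(w(zI)) = \beta_i(I)$ by Remark \ref{R.monomialTimesIdeal}, while $\beta_i((w)) = 0$ for $i \geq 1$ and $\beta_0((w)) = 1$; this is a genuine Betti splitting because the intersection term $w(zI)$ is generated in a multidegree (divisible by $wz$) not occurring in either summand, so the Tor-maps vanish. For the inductive step, Lemma \ref{L.Splitting}(2) tells us that
\[
(zI+(w))^h = (zI)^h + w(zI+(w))^{h-1}
\]
is a Betti splitting, with intersection $(zI)^h \cap w(zI+(w))^{h-1} = w(zI)^h$. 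Hence
\[
\beta_i\bigl((zI+(w))^h\bigr) = \beta_i\bigl((zI)^h\bigr) + \beta_i\bigl(w(zI+(w))^{h-1}\bigr) + \beta_{i-1}\bigl(w(zI)^h\bigr).
\]
Now $\beta_i((zI)^h) = \beta_i(I^h)$ and $\beta_{i-1}(w(zI)^h) = \beta_{i-1}(I^h)$ by Remark \ref{R.monomialTimesIdeal}, and $\beta_i(w(zI+(w))^{h-1}) = \beta_i((zI+(w))^{h-1}) = \beta_i((I+(w))^{h-1})$, again by Remark \ref{R.monomialTimesIdeal} and Lemma \ref{L.Splitting}(1). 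Plugging in the induction hypothesis for exponent $h-1$ collapses the recursion to the claimed telescoping sums: the $\sum_{\ell=1}^{h-1}$ from the $w(zI+(w))^{h-1}$ term picks up the new $\ell = h$ term $\beta_i(I^h)+\beta_{i-1}(I^h)$, and the $+1$ in the $\beta_0$ formula is carried along (for $i=0$ the $\beta_{-1}$ term is absent and $\beta_0(w(zI)^h)=0$ is not added). I would handle $i=0$ and $i\geq 1$ as separate bookkeeping lines to keep the constant term straight.

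For the final assertion, suppose $\beta_i^{\Z_p}(I^h) \neq \beta_i^{\Q}(I^h)$ for some $i \geq 1$; by inequality \eqref{Eq.universalCoefficientTheorem} applied summand-by-summand over multidegrees, $\beta_j^{\Q}(I^\ell) \leq \beta_j^{\Z_p}(I^\ell)$ for every $j$ and $\ell$, with strict inequality for $(j,\ell)=(i,h)$. Given $q \geq h$, the formula expresses $\beta_i((I+(w))^q)$ as $\sum_{\ell=1}^q[\beta_i(I^\ell)+\beta_{i-1}(I^\ell)]$; every term on the right satisfies $\beta_\bullet^{\Q}(I^\ell)\leq \beta_\bullet^{\Z_p}(I^\ell)$, and the term $\beta_i(I^h)$ is strictly smaller over $\Q$, so the whole sum is strictly smaller over $\Q$ than over $\Z_p$, giving $\beta_i^{\Q}((I+(w))^q) \neq \beta_i^{\Z_p}((I+(w))^q)$. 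The main obstacle is purely structural rather than computational: one must make sure that the two appeals per step to Lemma \ref{L.Splitting}—once for the splitting identity (2) on $(zI+(w))^h$, once to strip the artificial variable $z$ via (1) at exponent $h-1$—are legitimately applicable, i.e. that the inductive object $(I+(w))^{h-1}$ really does satisfy the hypotheses of Lemma \ref{L.Splitting} (generators of $I$ in the $x$'s, $w$ in the $y$'s), which it does since these properties are inherited unchanged. Everything else is a telescoping computation and the monotonicity of Betti numbers under \eqref{Eq.universalCoefficientTheorem}.
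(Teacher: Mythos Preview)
Your proposal is correct and follows essentially the same approach as the paper: reduce to $zI$ via Lemma~\ref{L.Splitting}(1), apply the Betti splitting of Lemma~\ref{L.Splitting}(2) together with Remark~\ref{R.monomialTimesIdeal} to obtain the recursion $\beta_i((zI+(w))^h)=\beta_i(I^h)+\beta_{i-1}(I^h)+\beta_i((I+(w))^{h-1})$, and conclude by induction on $h$; the characteristic-dependence corollary then follows from \eqref{Eq.universalCoefficientTheorem} applied term-by-term. The only cosmetic difference is that the paper reads off the $\beta_0$ formula directly from the decomposition $(I+(w))^h=\sum_{\ell=1}^h w^{h-\ell}I^\ell+(w^h)$ rather than folding it into the induction (and note your parenthetical ``$\beta_0(w(zI)^h)=0$'' is a slip---it is $\beta_{-1}$ of the intersection that vanishes, not $\beta_0$).
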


\begin{proof}
The formula for $\beta_{0}((I +(w))^h)$ follows immediately, because 
\[ 
(I +(w))^h=\sum_{\ell=1}^h w^{h-\ell}I^{\ell}+(w^h).
\]

Consider the ideal $I$ in the ring $R[z]$, where $z$ is a new variable. Fix now $i \geq 1$ and recall that $(zI)^h \cap w(zI +(w))^{h-1}=w(zI)^h$. By Lemma \ref{L.Splitting}(2), $(zI +(w))^h=(zI)^h+w(zI +(w))^{h-1}$ is a Betti splitting of $(zI +(w))^h$. Hence, by Remark \ref{R.monomialTimesIdeal} we obtain 
\[
\beta_{i}((zI +(w))^h)=\beta_{i}(I^h)+\beta_{i-1}(I^h)+\beta_{i}(w(zI +(w))^{h-1}).
\]
Observing that for $h=1$ we have $\beta_{i}((w))=0$ for $i \geq 1$, we get the formula by induction on $h$, by Remark \ref{R.monomialTimesIdeal} and Lemma \ref{L.Splitting}(1).

For the last part of the statement, suppose that $\beta^{\mathbb{Z}_p}_{i}(I^h) \neq \beta^{\mathbb{Q}}_{i}(I^h)$, for some prime $p$ and consider $q \geq h$. By the formula in the first part of the proof and inequality \eqref{Eq.universalCoefficientTheorem}, we have
\[
\beta_{i}^{\mathbb{Z}_p}((I+(w))^{q})-\beta_{i}^{\mathbb{Q}}((I +(w))^{q}) \geq \beta^{\mathbb{Z}_p}_{i}(I^h)-\beta^{\mathbb{Q}}_{i}(I^h) \geq 1. \qedhere 
\]
\end{proof}

\begin{example}
As seen in Example \ref{E.KatzmanEdgeIdeal}, Katzman's edge ideal $I(G)$ has characteristic-dependent Betti numbers but for $I(G)^2$ this dependence has disappeared.  Nevertheless, consider the graph $H$ obtained by adding a disjoint edge $\{y_1,y_2\}$ to $G$ and $I(H) = I(G) + (y_1y_2) \subseteq k[x_1,\dots,x_{12},y_1,y_2]$. Then, by Theorem \ref{T.Formula} the Betti numbers of $I(H)^h$ depend on the field for every $h \geq 1$. 
\end{example}

\begin{example}\label{E.nonDepEdgeIdealWithCharDepPowers}
We now construct an edge ideal $I(H)$ whose Betti numbers do not depend on the field and such that the Betti numbers of $I(H)^h$ depend on the field for every $h \geq 2$. Let $G$ be the graph of Example \ref{E.edgeIdealWithCharDepSquare} and consider the graph $H$ obtained by adding a disjoint edge $\{y_1,y_2\}$. Then, by Theorem \ref{T.Formula}, the Betti numbers of $I(H)^h$ depend on the field for every $h \geq 2$ and clearly do not depend for $h=1$.
\end{example}

For some reason, in the literature all explicit examples of monomial ideals whose resolution depends on the field have dependence in characteristic $2$. Clearly, it is well known that one can have
characteristic dependence in any characteristic. In the following we 
want to provide an explicit example for this dependence and use it
to propagate the dependence to powers. 

For every prime integer $p \geq 2$ there exist triangulable topological spaces with simplicial homology groups which are different with $\Q$ and $\Z_p$ coefficients (see for instance \cite[Theorem 40.9]{M84}). By the Stanley-Reisner correspondence and Hochster's formula \cite{H77}, this implies the existence of monomial ideals $I$ such that $\beta_i^{\Q}(I) \neq \beta_i^{\Z_p}(I)$, for some $i>0$. Here we present a class of such ideals coming from the so-called \textit{$p$-fold dunce cap}, which is a certain triangulation of a $2$-disk, where we identify its boundary in a suitable way, see \cite[Exercise 6, p.~41]{M84} and \cite[Example 5.11]{SW11}. For $p=2$, we obtain the real projective plane. We then extend the dependence to all powers by applying Theorem \ref{T.Formula}.

\begin{construction}\label{C.pfoldDunceCap}
Let $p \geq 2$ be a prime number. We are going to construct a $2$-dimensional triangulation $D_p$ of the so-called {\em p-fold dunce cap} with $2p+3$ vertices, $9p$ edges, and $7p-2$ facets.

Consider a regular $3p$-gon, with vertices labeled by cyclically repeating $1,2,3$ in clockwise order, see Figure \ref{F.3foldDunceCap} for a representation of the case $p=3$. Consider a regular $2p$-gon inside this, with vertices labeled by $4,\ldots,2p+3$. The facets of $D_p$ are:
\begin{itemize}
\item $\{2,k,k+1\}$, $\{1,2,k\}$, $\{1,3,k\}$, for every $4 \leq k \leq 2p+2$ even;
\item $\{3,k,k+1\}$, $\{2,3,k\}$, for every $5 \leq k \leq 2p+1$ odd;
\item $\{4,k,k+1\}$, for every  $5 \leq k \leq 2p+2$;
\item $\{2,3,2p+3\}$, $\{3,4,2p+3\}$.
\end{itemize}

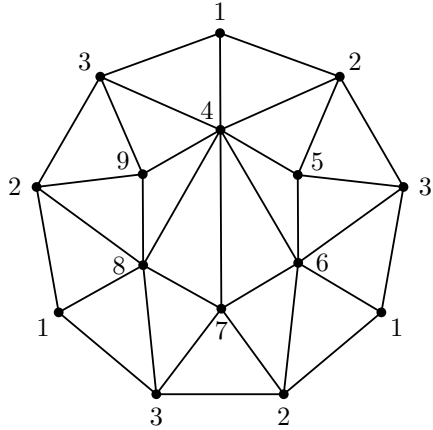
\begin{figure}[ht!]
\begin{tikzpicture}[scale=1.7]
\draw (0,0)-- (0.9965784671925104,0);
\draw (0.9965784671925104,0)-- (1.7600018641173611,0.6405882907917486);
\draw (1.7600018641173611,0.6405882907917486)-- (1.9330558988474436,1.6220264917679548);
\draw (1.9330558988474436,1.6220264917679548)-- (1.4347666652511888,2.485088761221226);
\draw (1.4347666652511888,2.485088761221226)-- (0.4982892335962555,2.8259386714056833);
\draw (0.4982892335962555,2.8259386714056833)-- (-0.4381881980586776,2.4850887612212262);
\draw (-0.4381881980586776,2.4850887612212262)-- (-0.9364774316549331,1.622026491767956);
\draw (-0.9364774316549331,1.622026491767956)-- (-0.7634233969248514,0.6405882907917494);
\draw (-0.7634233969248514,0.6405882907917494)-- (0,0);
\draw (-0.10623096204696426,1.7219623939801747)-- (-0.10271733566174435,1.010163643251805);
\draw (-0.10271733566174435,1.010163643251805)-- (0.5085806203848121,0.6691140736386733);
\draw (0.5085806203848121,0.6691140736386733)-- (1.1095871896428116,1.0279888480499286);
\draw (1.1095871896428116,1.0279888480499286)-- (1.1061904605131039,1.7161062760322694);
\draw (1.1061904605131039,1.7161062760322694)-- (0.5016702648698846,2.0690302523493846);
\draw (0.5016702648698846,2.0690302523493846)-- (-0.10623096204696426,1.7219623939801747);
\draw (0.4982892335962555,2.8259386714056833)-- (0.5016702648698846,2.0690302523493846);
\draw (0.5016702648698846,2.0690302523493846)-- (1.1095871896428116,1.0279888480499286);
\draw (0.5016702648698846,2.0690302523493846)-- (0.5085806203848121,0.6691140736386733);
\draw (0.5016702648698846,2.0690302523493846)-- (-0.10271733566174435,1.010163643251805);
\draw (0.5016702648698846,2.0690302523493846)-- (1.4347666652511888,2.485088761221226);
\draw (1.4347666652511888,2.485088761221226)-- (1.1061904605131039,1.7161062760322694);
\draw (1.1061904605131039,1.7161062760322694)-- (1.9330558988474436,1.6220264917679548);
\draw (1.9330558988474436,1.6220264917679548)-- (1.1095871896428116,1.0279888480499286);
\draw (1.1095871896428116,1.0279888480499286)-- (1.7600018641173611,0.6405882907917486);
\draw (1.1095871896428116,1.0279888480499286)-- (0.9965784671925104,0);
\draw (0.9965784671925104,0)-- (0.5085806203848121,0.6691140736386733);
\draw (0.5085806203848121,0.6691140736386733)-- (0,0);
\draw (0,0)-- (-0.10271733566174435,1.010163643251805);
\draw (-0.10271733566174435,1.010163643251805)-- (-0.7634233969248514,0.6405882907917494);
\draw (-0.9364774316549331,1.622026491767956)-- (-0.10271733566174435,1.010163643251805);
\draw (-0.9364774316549331,1.622026491767956)-- (-0.10623096204696426,1.7219623939801747);
\draw (-0.10623096204696426,1.7219623939801747)-- (-0.4381881980586776,2.4850887612212262);
\draw (-0.4381881980586776,2.4850887612212262)-- (0.5016702648698846,2.0690302523493846);
\node[label={below:{\small $3$}}] (a) at (0,0) {};
\node[label={below:{\small $2$}}] (b) at (0.9965784671925104,0) {};
\node[label={below right:{\small $1$}}] (c) at (1.7600018641173611,0.6405882907917486) {};
\node[label={right:{\small $3$}}] (d) at (1.9330558988474436,1.6220264917679548) {};
\node[label={above right:{\small $2$}}] (e) at (1.4347666652511888,2.485088761221226)  {};
\node[label={above:{\small $1$}}] (f) at (0.4982892335962555,2.8259386714056833) {};
\node[label={above left:{\small $3$}}] (g) at (-0.4381881980586776,2.4850887612212262) {};
\node[label={left:{\small $2$}}] (h) at (-0.9364774316549331,1.622026491767956) {};
\node[label={below left:{\small $1$}}] (i) at (-0.7634233969248514,0.6405882907917494) {};
\node[label={[label distance=1mm]115:{\small $4$}}] (j) at (0.5016702648698846,2.0690302523493846) {};
\node[label={below:{\small $7$}}] (k) at (0.5085806203848121,0.6691140736386733) {};
\node[label={[label distance=1mm]180:{\small $8$}}] (l) at (-0.10271733566174435,1.010163643251805) {};
\node[label={[label distance=1mm]0:{\small $6$}}] (m) at (1.1095871896428116,1.0279888480499286) {};
\node[label={[label distance=1mm]155:{\small $9$}}] (n) at (-0.10623096204696426,1.7219623939801747) {};
\node[label={[label distance=1mm]25:{\small $5$}}] (o) at (1.1061904605131039,1.7161062760322694) {};
\end{tikzpicture}
\caption{The triangulation $D_3$ representing a $3$-fold dunce cap}
\label{F.3foldDunceCap}
\end{figure}
\end{construction}

For instance, for $p=3$, the Stanley-Reisner ideal of $D_3$ is 
\begin{align*}
I_{D_3} = \ &(x_1 x_5, x_1 x_7, x_1 x_9, x_5 x_7, x_5 x_8, x_5 x_9, x_6 x_8, x_6 x_9, x_7 x_9, x_1 x_2 x_3, x_1 x_4 x_6, x_1 x_4 x_8, \\
&\ x_2 x_3 x_4, x_2 x_3 x_6, x_2 x_3 x_8, x_2 x_4 x_6, x_2 x_4 x_7, x_2 x_4 x_8, x_2 x_4 x_9, x_2 x_5 x_6, x_2 x_7 x_8, \\ 
&\  x_3 x_4 x_5, x_3 x_4 x_6, x_3 x_4 x_7, x_3 x_4 x_8, x_3 x_6 x_7, x_3 x_8 x_9) \subseteq k[x_1,\dots,x_9].
\end{align*}

Notice that, the Stanley-Reisner ideal $I_{D_p}$ is generated in degree $2$ and $3$. This is a consequence of the fact that $D_p$ is $2$-dimensional and hence any face has dimension $\leq 2$. Thus, a minimal non-face is of dimension $\leq 3$.

\begin{proposition}\label{P.pfoldDunceCap}
Let $p \geq 2$ be a prime number and $I_{D_p} \subseteq k[x_1, \ldots, x_{2p+3}]$ be the Stanley-Reisner ideal of the $p$-fold dunce cap in Construction \ref{C.pfoldDunceCap}. Then, $\pd_{\Z_p}(I_{D_p}) \neq \pd_\Q(I_{D_p})$.
\end{proposition}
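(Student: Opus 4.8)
The plan is to pin down the projective dimension over $\Q$ and over $\Z_p$ by feeding the characteristic‑dependent homology of the $p$‑fold dunce cap into Hochster's formula evaluated on the full vertex set. Write $\Delta=D_p$, $n=2p+3$, $R=k[x_1,\dots,x_n]$, so that $I_{D_p}=I_\Delta$, and recall that $\pd_k(I_\Delta)=\pd_k(R/I_\Delta)-1$ and that, by Hochster's formula \cite{H77}, $\beta^k_{i,W}(R/I_\Delta)=\dim_k\widetilde H_{|W|-i-1}(\Delta_W;k)$ for every $W\subseteq[n]$, where $\Delta_W=\{F\in\Delta\colon F\subseteq W\}$. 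The one substantive geometric input I need is that $\Delta$ triangulates the $p$‑fold dunce cap, so that $\widetilde H_i(\Delta;\Z)\cong\Z/p\Z$ for $i=1$ and $\widetilde H_i(\Delta;\Z)=0$ otherwise; I would either quote this from the sources attached to Construction~\ref{C.pfoldDunceCap} or deduce it by noting that $\Delta$ arises from a collapsible triangulated $3p$‑gon by wrapping its boundary cycle $p$ times around the triangle $x_1x_2x_3$, so that $\Delta$ is homotopy equivalent to the Moore space $M(\Z/p\Z,1)$. By the universal coefficient theorem, $\widetilde H_\ast(\Delta;\Q)=0$, while $\widetilde H_1(\Delta;\Z_p)\cong\widetilde H_2(\Delta;\Z_p)\cong\Z_p$, and $\Delta$ is connected, so $\widetilde H_0(\Delta;k)=0$ for every $k$.

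For the lower bound over $\Z_p$ I would apply Hochster's formula at $W=[n]$ and $i=n-2=2p+1$, where $|W|-i-1=1$: this gives $\beta^{\Z_p}_{2p+1,[n]}(R/I_\Delta)=\dim_{\Z_p}\widetilde H_1(\Delta;\Z_p)=1\ne 0$, so $\pd_{\Z_p}(R/I_\Delta)\ge 2p+1$ and hence $\pd_{\Z_p}(I_{D_p})\ge 2p$.

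For the upper bound over $\Q$ I would show that $\beta^{\Q}_{i,W}(R/I_\Delta)=0$ for all $W\subseteq[n]$ and all $i\ge n-2$, which yields $\pd_{\Q}(R/I_\Delta)\le n-3$, i.e.\ $\pd_{\Q}(I_{D_p})\le 2p-1$. Indeed, a nonzero Betti number requires $\widetilde H_{|W|-i-1}(\Delta_W;\Q)\ne 0$; since every element of $[n]$ is a genuine vertex of $\Delta$, the subcomplex $\Delta_W$ contains a vertex whenever $W\ne\emptyset$, so $\widetilde H_{<0}(\Delta_W;\Q)=0$ and we must have $|W|-i-1\ge 0$, i.e.\ $|W|\ge i+1\ge n-1$. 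This leaves only the cases: $W=[n]$ with $i=n-2$ (homological degree $1$, handled by $\widetilde H_1(\Delta;\Q)=0$); $W=[n]$ with $i=n-1$ (degree $0$, handled by $\widetilde H_0(\Delta;\Q)=0$); and $|W|=n-1$ with $i=n-2$ (degree $0$, handled once one checks that deleting any single vertex from $D_p$ leaves a connected complex). The first two are immediate; the last is a short combinatorial check I would carry out from the facet list of $D_p$ — after removing one vertex the interior $2p$‑gon fan is still connected (possibly only as a path rather than a wheel) and stays joined to the part carried by $x_1,x_2,x_3$ through the remaining facets. Combining the two bounds gives $\pd_{\Q}(I_{D_p})\le 2p-1<2p\le\pd_{\Z_p}(I_{D_p})$, in particular $\pd_{\Q}(I_{D_p})\ne\pd_{\Z_p}(I_{D_p})$.

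The only genuine obstacle is the homology computation for $D_p$: granting $\widetilde H_\ast(D_p;\Z)$, everything else is bookkeeping with Hochster's formula together with the trivial connectivity checks. If the homology is not taken for granted from Construction~\ref{C.pfoldDunceCap}, the cleanest self‑contained route is the collapse of the inner $2p$‑gon onto the boundary cycle followed by the identification with $M(\Z/p\Z,1)$; computing the ranks of the simplicial boundary maps directly also works but is tedious for general $p$. One could instead obtain the $\Q$‑bound via Reisner's criterion and the Auslander--Buchsbaum formula, since $R/I_{D_p}$ is Cohen--Macaulay over $\Q$, but verifying Reisner's condition requires controlling the homology of all links of $D_p$ rather than just of $D_p$ and of its single‑vertex deletions.
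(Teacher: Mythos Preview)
Your proof is correct and follows the same strategy as the paper: cite the integral homology of the $p$-fold dunce cap (from \cite{SW11}), apply the Universal Coefficient Theorem to get the homology over $\Q$ and $\Z_p$, and then deduce the projective dimensions via Hochster's formula. The paper is terser---it simply asserts $\pd_{\Z_p}(I_{D_p}) - \pd_{\Q}(I_{D_p}) = 1$ from the homology data---whereas your explicit treatment of the $|W|=n-1$ contributions (the connectivity of single-vertex deletions of $D_p$) fills in a step the paper leaves implicit.
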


\begin{proof}
By \cite[Example 5.11]{SW11}, we have $\widetilde{H}_1(D_p,\mathbb{Z})=\mathbb{Z}_p$ and $\widetilde{H}_0(D_p,\mathbb{Z})=\widetilde{H}_2(D_p,\mathbb{Z})=0$ whereas $\widetilde{H}_1(D_p,\mathbb{Z}_p)=\widetilde{H}_2(D_p,\mathbb{Z}_p)=\mathbb{Z}_p$ and  $\widetilde{H}_1(D_p,\mathbb{Q})=\widetilde{H}_2(D_p,\mathbb{Q})=0$ by the Universal Coefficients Theorem. 

Let $I_{D_p} \subseteq k[x_1,\dots,x_{2p+3}]$ be the Stanley-Reisner ideal of $D_p$. By the previous discussion, it follows that $\pd_{\Z_p}(I_{D_p}) - \pd_{\Q}(I_{D_p}) = 1$.
\end{proof}

In order to obtain an ideal having field dependent Betti numbers in finitely many different characteristics $p_1,\dots,p_r$, it is enough to consider the Stanley-Reisner ideal of various pairwise disjoint copies of $D_{p_1},D_{p_2},\ldots,D_{p_r}$.

\begin{corollary}\label{C.pfoldDunceCapCorollary}
For every prime number $p \geq 2$ there exists an edge ideal $I_p$ such that $\beta_i^{\mathbb{Z}_p}(I_p^h) \neq \beta_i^\mathbb{Q}(I_p^h)$ for some $i \geq 1$ and for every $h \geq 1$.
\end{corollary}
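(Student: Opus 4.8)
The plan is to manufacture the single-power dependence from the $p$-fold dunce cap of Construction~\ref{C.pfoldDunceCap} and then spread it to all powers via Theorem~\ref{T.Formula}. The only real difficulty is that $I_{D_p}$ is \emph{not} an edge ideal: since $D_p$ is $2$-dimensional it has $2$-dimensional minimal nonfaces, so $I_{D_p}$ has cubic generators. Hence the first task is to replace $D_p$ by a \emph{flag} simplicial complex carrying the same (characteristic-sensitive) reduced homology, whose Stanley--Reisner ideal is then automatically generated by squarefree quadrics, i.e.\ an edge ideal.

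First I would set $\Sigma_p:=\mathrm{sd}(D_p)$, the barycentric subdivision of $D_p$ (equivalently, the order complex of the poset of nonempty faces of $D_p$); any flag complex homotopy equivalent to $D_p$ would serve equally well. Being an order complex, $\Sigma_p$ is flag --- its faces are the chains of the poset, which are exactly the cliques of its $1$-skeleton --- and since $|\Sigma_p|\cong|D_p|$ we have $\widetilde H_\bullet(\Sigma_p;k)\cong\widetilde H_\bullet(D_p;k)$ for every field $k$. Flagness forces every minimal nonface of $\Sigma_p$ to be an edge, so its Stanley--Reisner ideal $I_{\Sigma_p}$ is the edge ideal $I(G_p)$ of the graph $G_p$ on $V(\Sigma_p)$ whose edges are the nonedges of $\Sigma_p$. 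Now Hochster's formula \cite{H77}, evaluated on the full vertex set $V(\Sigma_p)$, reads $\beta^k_{i,V(\Sigma_p)}(I_{\Sigma_p})=\dim_k\widetilde H_{N-i-2}(\Sigma_p;k)$, where $N:=|V(\Sigma_p)|$; in homological degree $i=N-3$ this is
\[
\beta^k_{N-3,\,V(\Sigma_p)}(I(G_p)) \;=\; \dim_k\widetilde H_{1}(\Sigma_p;k) \;=\; \dim_k\widetilde H_{1}(D_p;k),
\]
which by Proposition~\ref{P.pfoldDunceCap} equals $1$ for $k=\Z_p$ and $0$ for $k=\Q$. Summing over multidegrees and applying \eqref{Eq.universalCoefficientTheorem} termwise gives $\beta^{\Z_p}_{N-3}(I(G_p))>\beta^{\Q}_{N-3}(I(G_p))$, and $N-3\geq1$ since $N$ is large.

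To conclude, I would introduce two new variables $y_1,y_2$ and set $I_p:=I(G_p)+(y_1y_2)$, which is again an edge ideal --- that of $G_p$ together with one disjoint extra edge. Applying Theorem~\ref{T.Formula} to the ideal $I(G_p)$ (whose generators involve only the variables of $G_p$), with $w=y_1y_2$ and, in the notation of that theorem, $h=1$, the dependence $\beta^{\Z_p}_{N-3}(I(G_p))\neq\beta^{\Q}_{N-3}(I(G_p))$ propagates to $\beta^{\Z_p}_{N-3}(I_p^{\,q})\neq\beta^{\Q}_{N-3}(I_p^{\,q})$ for every $q\geq1$, which is exactly the corollary with $i=N-3$. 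The main obstacle is the middle step: the dunce cap is not flag, so one must verify both that passing to a flag model preserves the characteristic-sensitive homology (barycentric subdivision does, being a homeomorphism) and that evaluating Hochster's formula on the whole vertex set genuinely detects that homology. Everything else is bookkeeping; one may add the remark that this is a general mechanism, since any squarefree monomial ideal with characteristic-dependent Betti numbers yields, through the barycentric subdivision of its associated complex, an edge ideal with the same property, to which Theorem~\ref{T.Formula} then applies.
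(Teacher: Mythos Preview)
Your proof is correct and follows essentially the same route as the paper: pass from $D_p$ to its barycentric subdivision to obtain a flag complex (hence an edge ideal) with the same characteristic-sensitive homology, then adjoin a disjoint edge $y_1y_2$ and invoke Theorem~\ref{T.Formula}. You supply more detail than the paper does---explicitly verifying flagness of $\mathrm{sd}(D_p)$ via its order-complex structure and pinpointing the homological degree $i=N-3$ through Hochster's formula---while the paper simply asserts the dependence and also offers the Dalili--Kummini construction \cite[Construction~4.4]{DK14} as an alternative flagification; note only that the homology values you invoke are stated in the \emph{proof} of Proposition~\ref{P.pfoldDunceCap} rather than in its statement.
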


\begin{proof}
Let $p \geq 2$ be a prime number and consider the simplicial complex $D_p$ from Construction \ref{C.pfoldDunceCap}. Let $I_p$ be the edge ideal of the simplicial complex obtained either by taking  the Stanley-Reisner ideal of the barycentric subdivision of $D_p$ or performing on $D_p$ \cite[Construction 4.4]{DK14} by Dalili and Kummini. Notice that, $\beta^{\Z_p}_i (I_p) \neq \beta^\Q_i (I_p)$ for some $i$.

Then, by Theorem \ref{T.Formula}, it follows that
\[
\beta^{\Z_p}_i ((I_p + (y_1y_2))^h) \neq \beta^{\Q}_i ((I_p + (y_1y_2))^h),
\]
where $y_1,y_2$ are two new variables.
\end{proof}

\subsection{Kodiyalam polynomials}

As seen is Section \ref{S.asymptoticBehaviour}, Kodiyalam polynomials may depend on the characteristic of the field. In this subsection we show how to spread the dependence to high degree terms of these polynomials.

\begin{lemma}\label{L.spreadingDependenceToPowers}
Let $I$ be a monomial ideal in $k[x_1,\dots,x_n,y_1,\dots,y_r]$, with generators in the variables $x_1,\dots,x_n$. Let $w$ be a monic monomial in the variables $y_1,\dots,y_r$ and fix $h,i \in \mathbb{N}_{>0}$. Consider 
\[
B=\{1 \leq s \leq h: \beta_i^{\mathbb{Z}_p}(I^s) \neq \beta_i^\mathbb{Q}(I^s)\}.
\]
Then for every $q \geq h$ we have
\[
\beta_i^{\mathbb{Z}_p}((I+(w))^q) - \beta_i^\mathbb{Q}((I+(w))^q) \geq |B|. 
\]
\end{lemma}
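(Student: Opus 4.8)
The plan is to re-run the computation of Theorem \ref{T.Formula} while keeping track of the contributions of all powers $I^s$ with $1 \le s \le h$, rather than just of the single power $I^h$. By the first formula in Theorem \ref{T.Formula}, for every field $k$ and every $q \ge 1$ we have
\[
\beta_i^{k}((I+(w))^q) = \sum_{\ell=1}^{q}\bigl[\beta_i^{k}(I^{\ell}) + \beta_{i-1}^{k}(I^{\ell})\bigr].
\]
The key point is that this is a genuine equality of integers (not an inequality), so subtracting the analogous identity over $\Q$ gives, for $q \ge h$,
\[
\beta_i^{\Z_p}((I+(w))^q) - \beta_i^{\Q}((I+(w))^q)
= \sum_{\ell=1}^{q}\Bigl[\bigl(\beta_i^{\Z_p}(I^{\ell}) - \beta_i^{\Q}(I^{\ell})\bigr) + \bigl(\beta_{i-1}^{\Z_p}(I^{\ell}) - \beta_{i-1}^{\Q}(I^{\ell})\bigr)\Bigr].
\]

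First I would invoke inequality \eqref{Eq.universalCoefficientTheorem}: for each $\ell$ and each homological degree $j$ one has $\beta_j^{\Q}(I^{\ell}) \le \beta_j^{\Z_p}(I^{\ell})$, so every summand in the displayed sum is a nonnegative integer. Next I would restrict the sum over $1 \le \ell \le q$ to the subsum over $\ell \in B \subseteq \{1,\dots,h\} \subseteq \{1,\dots,q\}$, which only decreases it by dropping nonnegative terms; within that subsum I would further drop the $(i-1)$-contributions, again nonnegative, keeping only the $\beta_i^{\Z_p}(I^{\ell}) - \beta_i^{\Q}(I^{\ell})$ terms. For $\ell \in B$ we have $\beta_i^{\Z_p}(I^{\ell}) \neq \beta_i^{\Q}(I^{\ell})$, and combined with \eqref{Eq.universalCoefficientTheorem} this forces $\beta_i^{\Z_p}(I^{\ell}) - \beta_i^{\Q}(I^{\ell}) \ge 1$. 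Hence the full difference is bounded below by $\sum_{\ell \in B} 1 = |B|$, which is the claim.

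There is essentially no obstacle here: the entire content is already packaged in Theorem \ref{T.Formula}'s exact formula together with the Universal Coefficient inequality \eqref{Eq.universalCoefficientTheorem}, and the argument is the same monotonicity bookkeeping used at the end of the proof of Theorem \ref{T.Formula}, merely summed over all the "good" powers instead of one. The only point requiring a line of care is making sure the formula of Theorem \ref{T.Formula} is stated as an equality (it is) so that the differences of the two sums telescope termwise; after that, discarding nonnegative summands and applying the strict-inequality-plus-$\le$ trick on each $\ell \in B$ finishes the proof. One could also note in passing that the same reasoning gives the stronger bound $\beta_i^{\Z_p}((I+(w))^q) - \beta_i^{\Q}((I+(w))^q) \ge \sum_{\ell=1}^{h}\bigl[(\beta_i^{\Z_p}(I^\ell)-\beta_i^{\Q}(I^\ell)) + (\beta_{i-1}^{\Z_p}(I^\ell)-\beta_{i-1}^{\Q}(I^\ell))\bigr]$, but $|B|$ is the clean statement needed for the Kodiyalam-polynomial application that follows.
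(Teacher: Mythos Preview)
Your proof is correct and follows essentially the same route as the paper: apply the exact formula of Theorem~\ref{T.Formula} for $\beta_i^{k}((I+(w))^q)$, subtract the $\Q$-identity from the $\Z_p$-identity, use inequality~\eqref{Eq.universalCoefficientTheorem} to see every summand is nonnegative, and then keep only the $\beta_i$-contributions for $\ell\in B$, each of which is $\ge 1$. The paper additionally singles out the trivial case $B=\emptyset$ and remarks that $\beta_0$ is characteristic-independent (forcing $i\ge 1$), but these are cosmetic since $i\in\mathbb N_{>0}$ is already hypothesized and your argument handles $B=\emptyset$ automatically.
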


\begin{proof}
First of all, if $B = \emptyset$, the claim follows by inequality \eqref{Eq.universalCoefficientTheorem}.
Suppose $B \neq \emptyset$ and let $s \in B$. Since $I^{s}$ is a monomial ideal, its minimal monomial generators are uniquely defined and independent of the field, and hence $i \geq 1$. By Theorem \ref{T.Formula}, it follows that 
\[
\beta_i^{k}((I+(w))^q)=\sum_{\ell=1}^q \left[\beta_i^k(I^{\ell})+\beta_{i-1}^k(I^{\ell}) \right].
\] 
Then 
\begin{align*}
\beta_i^{\mathbb{Z}_p}((I+(w))^q) - \beta_i^\mathbb{Q}((I+(w))^q) &= \sum_{\ell=1}^q \left[\beta_i^{\mathbb{Z}_p}(I^{\ell})+\beta_{i-1}^{\mathbb{Z}_p}(I^{\ell})-\beta_i^{\mathbb{Q}}(I^{\ell})-\beta_{i-1}^{\mathbb{Q}}(I^{\ell})\right]\\
& \geq \sum_{\ell=1}^h \left[\beta_{i-1}^{\mathbb{Z}_p}(I^{\ell})-\beta_{i-1}^{\mathbb{Q}}(I^{\ell})\right]+|B| \geq |B|,
\end{align*}
where the first inequality follows from \eqref{Eq.universalCoefficientTheorem}. 
\end{proof}

\begin{lemma}\label{L.SpreadingDependence2}
Let $I$ be a monomial ideal in $R=k[x_1,\dots,x_n]$ and $i, h, r \in \mathbb{N}$. Assume that $\beta^{\mathbb{Z}_p}_{i}(I^\ell) \neq \beta^{\mathbb Q}_{i}(I^\ell)$ for every $1 \leq \ell \leq h$. If $J=I+(y_1,\dots,y_{r+1})$ in $R[y_1,\dots,y_{r+1}]$, where $y_1,\dots,y_{r+1}$ are new variables, then $\beta^{\mathbb{Z}_p}_{i+a}(J^h)-\beta^{\mathbb Q}_{i+a}(J^h) \geq \binom{h+r}{r+1}$, for every $0 \leq a \leq r$.
\end{lemma}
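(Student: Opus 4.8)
The plan is to iterate Theorem \ref{T.Formula}, which governs the effect of adjoining a single monomial in fresh variables, $r+1$ times — once for each new variable $y_1,\dots,y_{r+1}$, each adjoined as the monomial $w=y_j$. Write $J_0 = I$ and $J_j = J_{j-1}+(y_j)$ for $1 \le j \le r+1$, so that $J = J_{r+1}$. Since each $y_j$ is a monic monomial in variables disjoint from those of $G(J_{j-1})$, Theorem \ref{T.Formula} applies at every stage and yields, for each field $k$ and every homological degree $t \ge 1$,
\[
\beta_t^k(J_j^h) = \sum_{\ell=1}^h \bigl[\beta_t^k(J_{j-1}^\ell) + \beta_{t-1}^k(J_{j-1}^\ell)\bigr].
\]
The main work is to track how a single ``seed'' inequality $\beta_i^{\Z_p}(I^\ell) > \beta_i^{\Q}(I^\ell)$ for all $1 \le \ell \le h$ propagates through these $r+1$ convolutions, landing in homological degree $i+a$ with multiplicity $\binom{h+r}{r+1}$ for each $0 \le a \le r$.

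**The key combinatorial estimate.** Introduce the ``defect'' $D_j^{(t)}(\ell) := \beta_t^{\Z_p}(J_j^\ell) - \beta_t^{\Q}(J_j^\ell)$, which is $\ge 0$ for all $t,\ell$ by inequality \eqref{Eq.universalCoefficientTheorem}. The recursion above gives, discarding the $\beta_t$-summand which is also nonnegative,
\[
D_j^{(t)}(h) \;\ge\; \sum_{\ell=1}^h \bigl[D_{j-1}^{(t)}(\ell) + D_{j-1}^{(t-1)}(\ell)\bigr] \;\ge\; \sum_{\ell=1}^h D_{j-1}^{(t-1)}(\ell),
\]
and also $D_j^{(t)}(h) \ge \sum_{\ell=1}^h D_{j-1}^{(t)}(\ell)$. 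The hypothesis is $D_0^{(i)}(\ell) \ge 1$ for $1 \le \ell \le h$. The plan is to prove by induction on $j$ that for $0 \le a \le j$,
\[
D_j^{(i+a)}(h) \;\ge\; \binom{h+j-1}{j},
\]
(so that the case $j = r+1$, with $a$ replaced by $a$ ranging in $0 \le a \le r$ — note $r \le r+1$ — gives exactly $\binom{h+r}{r+1}$). At each step one has two moves available: either ``stay'' at homological degree $t$ (picking up a sum over $\ell$) or ``shift'' from $t-1$ to $t$ (also picking up a sum over $\ell$). To reach $i+a$ after $j$ steps one makes $a$ shifts and $j-a$ stays; each step contributes a summation $\sum_{\ell=1}^{(\cdot)}$. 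The composition of $j$ nested partial-sum operators applied to the constant function $1$ produces the binomial coefficient $\binom{h+j-1}{j}$, which is the standard ``hockey-stick'' / repeated-summation identity. The cleanest way to organize this is to prove a lemma: if $f(\ell) \ge \binom{\ell+c-1}{c}$ for all $1 \le \ell \le h$, then $\sum_{\ell=1}^h f(\ell) \ge \binom{h+c}{c+1}$; then apply it $j$ times.

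**Assembling the proof.** With the estimate in hand, set $j = r+1$. We need $D_{r+1}^{(i+a)}(h) \ge \binom{h+r}{r+1}$ for $0 \le a \le r$; since $a \le r < r+1 = j$, the inductive claim applies and gives $D_{r+1}^{(i+a)}(h) \ge \binom{h+(r+1)-1}{r+1} = \binom{h+r}{r+1}$, which is precisely the asserted bound. Translating back, $\beta_{i+a}^{\Z_p}(J^h) - \beta_{i+a}^{\Q}(J^h) \ge \binom{h+r}{r+1}$ for every $0 \le a \le r$, as claimed. One should double-check the base case of the induction ($j=0$: $D_0^{(i)}(h) \ge 1 = \binom{h-1}{0}$, using the hypothesis at $\ell = h$) and that $i+a \ge i \ge 1$ throughout so that Theorem \ref{T.Formula}'s formula for $\beta_{i}$, $i\ge 1$, is the one being used at every stage (the $\beta_0$-formula never enters). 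A minor subtlety: when applying the recursion to get $D_j^{(t)}(h)$ from degree-$(t-1)$ data, the induction hypothesis at level $j-1$ must be available at homological degree $(i+a)-1 = i+(a-1)$, which requires $a-1 \ge 0$, i.e. the ``shift'' move is only legal while $a \ge 1$ — exactly matching that we make $a$ shifts total; the bookkeeping is routine once the two-move picture is fixed.

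**Main obstacle.** The only real subtlety is organizing the double recursion (shift in homological degree versus stay, each coupled with a partial-sum in the power index) so that the binomial coefficient $\binom{h+r}{r+1}$ emerges cleanly rather than through an opaque multi-index sum; isolating the one-step summation lemma $\sum_{\ell=1}^h \binom{\ell+c-1}{c} = \binom{h+c}{c+1}$ and induction on the number of adjoined variables is what makes this transparent. Everything else — nonnegativity of defects, applicability of Theorem \ref{T.Formula} at each stage, the fact that $i \ge 1$ — is immediate.
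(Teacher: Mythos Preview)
Your proof is correct and follows essentially the same route as the paper's: induction on the number of adjoined variables, with Theorem~\ref{T.Formula} supplying the recursion, inequality~\eqref{Eq.universalCoefficientTheorem} ensuring all defect terms are nonnegative so that one summand may be dropped, and the hockey-stick identity $\sum_{\ell=1}^h \binom{\ell+c-1}{c}=\binom{h+c}{c+1}$ driving the binomial growth. The only cosmetic differences are that the paper starts its induction at $r=0$ (invoking Lemma~\ref{L.spreadingDependenceToPowers} for the base case) rather than at $j=0$, and that it phrases the case split as ``$0\le a\le r-1$ versus $a=r$'' rather than your ``stay versus shift'' language; these are the same two moves.
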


\begin{proof}
We proceed by induction on $r \geq 0$. If $r=0$, the result follows by Lemma \ref{L.spreadingDependenceToPowers}. Let $r>0$. By induction, for the ideal $T=I+(y_1,\dots,y_r)$ we have $$\beta^{\mathbb{Z}_p}_{i+a}(T^{\ell})-\beta^{\mathbb Q}_{i+a}(T^{\ell}) \geq \binom{\ell+r-1}{r},$$ for every $1 \leq \ell \leq h$ and every $0 \leq a \leq r-1$. Fix $0 \leq a \leq r-1$. Since $J=T+(y_{r+1})$, by Theorem \ref{T.Formula} and inequality \eqref{Eq.universalCoefficientTheorem}, it follows that 
\[
\beta^{\mathbb{Z}_p}_{i+a}(J^{h})-\beta^{\mathbb Q}_{i+a}(J^{h}) \geq \sum_{\ell=1}^h \left[\beta^{\mathbb{Z}_p}_{i+a}(T^{\ell})-\beta^{\mathbb Q}_{i+a}(T^{\ell}) \right] \geq \sum_{\ell=1}^h \binom{\ell+r-1}{r}=\binom{h+r}{r+1}. 
\]

The statement for $a=r$ follows similarly by Theorem \ref{T.Formula} and using the fact that $\sum_{\ell = 1}^h \left[ \beta^{\mathbb{Z}_p}_{i+r}(T^\ell)-\beta^{\mathbb Q}_{i+r}(T^\ell) \right] \geq 0$ by \eqref{Eq.universalCoefficientTheorem}.
\end{proof}

Given a monomial ideal $I$ in $k[x_1,\dots,x_n]$, it is clear that $\beta^k_0(I)$ is the number of minimal generators of $I$, and hence it does not depend on $k$; moreover, the same holds for $\beta^k_1(I)$ by \cite[Corollary 5.3]{BH95}. Thus, $\mathfrak{P}_1^k(I)$ and $\mathfrak{P}_2^k(I)$ are independent of the characteristic of the field $k$. We show that this is not the case for $\mathfrak{P}_i^k(I)$ with $i \geq 3$.

\begin{theorem} \label{T.Kodiyalam}
For every $i \geq 3$ and for every $r \in \mathbb{N}$, there exists a monomial ideal $I$ such that all the Kodiyalam polynomials $\mathfrak P^k_{3}(I),\mathfrak P^k_{4}(I),\dots,\mathfrak P^k_{i+r}(I)$ have the coefficient at some degree $\geq r$ depending on the characteristic of $k$. 
\end{theorem}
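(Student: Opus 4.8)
The plan is to produce $I$ by adjoining new variables to a carefully chosen seed ideal and then to invoke Lemma~\ref{L.SpreadingDependence2}. As the seed I take the Kimura--Terai--Yoshida ideal $A\subseteq k[x_1,\dots,x_{10}]$ of Section~\ref{S.KimuraTeraiYoshida}, for which $\beta_2^{\Z_2}(A^\ell)\neq\beta_2^{\Q}(A^\ell)$ for \emph{every} $\ell\geq 1$. Since $\beta_0$ and $\beta_1$ of a monomial ideal never depend on the field, homological degree $2$ is the lowest place at which a dependence can occur, and it is precisely this minimality that will let the dependence reach the Kodiyalam polynomial $\mathfrak{P}_3$.

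Set $r_0:=i+r-3$; because $i\geq 3$ we have $r_0\geq 0$. Let $y_1,\dots,y_{r_0+1}$ be new variables and put $I:=A+(y_1,\dots,y_{r_0+1})$, which is again a monomial ideal. Apply Lemma~\ref{L.SpreadingDependence2} to $A$, taking the lemma's homological index to be $2$, its parameter $r$ to be $r_0$, and its $h$ arbitrary (this is legitimate precisely because $\beta_2^{\Z_2}(A^\ell)\neq\beta_2^{\Q}(A^\ell)$ holds for \emph{all} $\ell\geq 1$). This yields, for every $h\geq 1$ and every $0\leq a\leq r_0$,
\[
\beta_{2+a}^{\Z_2}(I^h)-\beta_{2+a}^{\Q}(I^h)\;\geq\;\binom{h+r_0}{r_0+1}.
\]

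It remains to pass to the Kodiyalam polynomials. For $h\gg 0$ one has $\beta_{2+a}^{k}(I^h)=\mathfrak{P}_{3+a}^{k}(I)(h)$, so the polynomial $Q_a:=\mathfrak{P}_{3+a}^{\Z_2}(I)-\mathfrak{P}_{3+a}^{\Q}(I)$ satisfies $Q_a(h)\geq\binom{h+r_0}{r_0+1}$ for all large $h$. Since $\binom{h+r_0}{r_0+1}$ is a polynomial in $h$ of degree $r_0+1$ with positive leading coefficient, any polynomial that eventually dominates it must have degree at least $r_0+1$; hence $\deg Q_a\geq r_0+1=i+r-2\geq r+1>r$, using $i\geq 3$. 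Therefore the (nonzero) coefficient of $h^{\deg Q_a}$ in $Q_a$ witnesses that $\mathfrak{P}_{3+a}^{\Z_2}(I)$ and $\mathfrak{P}_{3+a}^{\Q}(I)$ differ in their coefficient at some degree $\geq r$. As $a$ ranges over $0,1,\dots,r_0$, the index $3+a$ ranges over $3,4,\dots,i+r$, which is exactly the required list, so $I$ works.

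Essentially all the content is carried by the two ingredients just used, so there is no hard residual obstacle. The points that need care are: choosing a seed whose dependence already lives in homological degree $2$ (this is what captures $\mathfrak{P}_3$, and is why one should not simply reuse the dunce-cap edge ideals, whose dependent degree is not controlled); the index bookkeeping, reconciling $\mathfrak{P}_j\leftrightarrow\beta_{j-1}$ and fixing the number $r_0+1=i+r-2$ of new variables so that the affected range is exactly $\{3,\dots,i+r\}$; and the elementary observation that a polynomial eventually bounded below by $\binom{h+r_0}{r_0+1}$ must have degree $\geq r_0+1$, which is what upgrades a merely growing difference of Betti numbers into a genuinely field-dependent high-degree coefficient.
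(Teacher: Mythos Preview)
Your proof is correct and follows essentially the same route as the paper: the same seed ideal $A$, the same number $i+r-2$ of added variables, and the same appeal to Lemma~\ref{L.SpreadingDependence2}. The only cosmetic difference is in the final step: the paper bounds $\binom{h+r+i-3}{r+i-2}$ below by $h^{r}$ via an explicit estimate for $h\geq(r+i-2)!$, whereas you observe directly that $\binom{h+r_0}{r_0+1}$ is a polynomial of degree $r_0+1$ in $h$, forcing $\deg Q_a\geq r_0+1$; your version is arguably cleaner and even yields the marginally sharper conclusion $\deg Q_a\geq r+1$.
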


\begin{proof}
Fix $i \geq 3$ and $r \in \mathbb{N}$. Let $A \subseteq k[x_1,\dots,x_{10}]$ be the monomial ideal introduced in Section \ref{S.KimuraTeraiYoshida} for which we know that $\beta_2^{\mathbb{Z}_2}(A^{\ell}) \neq \beta_2^{\mathbb{Q}}(A^{\ell})$, for every $\ell \in \mathbb{N}$. Set $I=A+(y_1, \dots, y_{r+i-2})$ in the polynomial ring $k[x_1,\dots,x_{10}, y_1, \dots, y_{r+i-2}]$. 
By Lemma \ref{L.SpreadingDependence2}, we have that $\beta^{\mathbb{Z}_2}_{2+a}(I^h)-\beta^{\mathbb Q}_{2+a}(I^h) \geq \binom{h+r+i-3}{r+i-2}$, for every $0 \leq a \leq r+i-3$.
If $h \geq (r+i-2)!$, we have
\[
\beta^{\Z_2}_{2+a}(I^h)-\beta^{\Q}_{2+a}(I^h) \geq 
\frac{(h+r+i-3) \cdots (h+1)h}{(r+i-2)!} \geq (h+r+i-3) \cdots (h+1) > h^{r+i-3} \geq h^{r}.
\]
This implies that $\mathfrak P^k_{3+a}(I)$ has a coefficient of degree at least $r$ that depends on the characteristic of $k$, for every $0 \leq a \leq i+r-3$.
\end{proof}

Notice that in this case the degree of the Kodiyalam polynomial goes up by one. Moreover, Theorem \ref{T.Kodiyalam} answers a question of Herzog and the fourth author, see the last paragraph of Section 1 in \cite{HW11}.

\section{Examples and questions}

In this section we collect open questions, conjectures and some interesting examples beyond monomial ideals.

\subsection{Questions and conjectures}

The following conjecture is based on numerous computer experiments.

\begin{conjecture}\label{C.coneEdgeIdeals}
Let $G$ be a connected graph on $n$ vertices, $I(G) \subseteq k[x_1,\dots,x_n]$ be its edge ideal and $J = I(G) + x_{n+1}(x_1,\dots,x_n) \subseteq k[x_1,\dots,x_n,x_{n+1}]$ be the edge ideal of the cone over $G$ from a new vertex $n+1$. If the Betti numbers of $I(G)$ depend on the field, then the same holds for $J^2$.
\end{conjecture}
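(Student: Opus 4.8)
The plan is to push everything into lcm-lattices via \cite[Theorem 2.1]{GPW99} and to trace where the topological obstruction witnessing the field-dependence of $I(G)$ reappears inside the lcm-lattice of $J^2$. Write $y=x_{n+1}$ and $\mathfrak{m}=(x_1,\dots,x_n)$, so $J=I(G)+y\mathfrak{m}$ and
\[
J^2=I(G)^2+y\,\mathfrak{m}\, I(G)+y^2\mathfrak{m}^2 .
\]
Every minimal generator of $J^2$ has $y$-degree $0$, $1$ or $2$, so $G(J^2)$ is the disjoint union of $G(I(G)J)$ (the generators of $y$-degree $\le 1$) and $G(y^2\mathfrak{m}^2)$, while $G(I(G)J)$ is the disjoint union of $G(I(G)^2)$ and $G(y\,\mathfrak{m} I(G))$; one checks $I(G)J\cap y^2\mathfrak{m}^2=y^2\mathfrak{m} I(G)$ and $I(G)^2\cap y\mathfrak{m} I(G)=yI(G)^2$. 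The first step is to prove that $J^2=I(G)J+y^2\mathfrak{m}^2$ and $I(G)J=I(G)^2+y\,\mathfrak{m} I(G)$ are Betti splittings, which I would do by the multidegree-wise vanishing-of-$\mathrm{Tor}$-maps method from the proof of Lemma~\ref{L.Splitting}, the $y$-grading again separating the summands. Combined with Remark~\ref{R.monomialTimesIdeal} (which deletes the powers of $y$), this gives
\[
\beta_i^k(J^2)=\beta_i^k(I(G)^2)+\beta_{i-1}^k(I(G)^2)+\beta_i^k(\mathfrak{m} I(G))+\beta_{i-1}^k(\mathfrak{m} I(G))+\beta_i^k(\mathfrak{m}^2).
\]
Since $\mathfrak{m}^2$ is field-independent and inequality~\eqref{Eq.universalCoefficientTheorem} forbids cancellation of the dependences of the remaining summands, the conjecture reduces to the purely graph-theoretic statement: \emph{if $G$ is connected and $I(G)$ has characteristic-dependent Betti numbers, then $I(G)^2$ or $\mathfrak{m} I(G)$ does}. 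In the examples motivating the conjecture (e.g. Katzman's graph of Example~\ref{E.KatzmanEdgeIdeal}, where $I(G)^2$ is field-independent) this is the assertion that $\mathfrak{m} I(G)$ is field-dependent.

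The core of the argument is thus the claim that \emph{$\mathfrak{m}\, I(G)$ inherits the field-dependence of $I(G)$ when $G$ is connected}. Again via \cite[Theorem 2.1]{GPW99}, fix a multidegree $\sigma$ (necessarily squarefree) at which the reduced integral homology of $(1,\sigma)_{L_{I(G)}}$ has $p$-torsion. I would relate $(1,\sigma)_{L_{I(G)}}$ to an interval of $L_{\mathfrak{m} I(G)}$ — most likely the interval below a non-squarefree multidegree refining $\sigma$, read off after polarizing $\mathfrak{m} I(G)$ — through a zig-zag of order-preserving maps of posets, the basic one being the ``core'' map $S\mapsto S\setminus\{\text{isolated vertices of }G[S]\}$ from $\{S:|S|\ge 3,\ E(G[S])\neq\emptyset\}$ onto $\{S:G[S]\text{ has no isolated vertex}\}$. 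The aim is to show, via Quillen's fiber lemma or an explicit deformation retraction, that such a map is a homology isomorphism in the degree carrying the torsion, so the $p$-torsion survives into $L_{\mathfrak{m} I(G)}$, hence into $L_{J^2}$, producing a field-dependent Betti number of $J^2$.

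I expect this last, homotopy-theoretic, step to be the main obstacle. The relevant interval of $L_{\mathfrak{m} I(G)}$ is genuinely larger than $(1,\sigma)_{L_{I(G)}}$: it contains vertex sets carrying isolated vertices, it omits the two-element sets, and it lives partly at non-squarefree multidegrees; so one must control the fibers of the core map, i.e. the subposets $\{S:|S|\ge 3,\ E(G[S])\neq\emptyset,\ S\setminus\{\text{isolated vertices of }G[S]\}\subseteq U\}$, which need not have a unique maximal element. Proving them contractible is where connectedness of $G$ should enter — it guarantees that $G$, and large induced subgraphs of it, have no isolated vertices, which is exactly what makes the completions well-behaved (and what fails for, say, the real projective plane ideal, whose degree-$3$ generators are too dense). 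A secondary point is that the Betti-splitting bookkeeping of the first step, while very much in the spirit of Lemma~\ref{L.Splitting}, still needs its own multidegree-by-multidegree check. Throughout, one should keep in mind that the Hilbert series is field-independent, so the dependence necessarily occurs in a balanced pair of consecutive Betti numbers; hence every step has to be genuinely homological rather than an Euler-characteristic count, which is why \eqref{Eq.universalCoefficientTheorem} is invoked to exclude cancellations. If the core-map comparison proves too delicate for a full homotopy equivalence, a weaker fallback suffices: showing that the $p$-torsion subgroup merely injects, by inducting on the number of vertices attachable as isolated points, or by a direct chain-level comparison.
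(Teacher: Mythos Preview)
This statement is presented in the paper as an open \emph{conjecture} based on computer experiments; the paper offers no proof, only the remark that the analogous claim for $J$ itself follows easily from Hochster's formula and that the analogue for $J^3$ fails already for Katzman's graph. So there is no argument in the paper to compare yours against, and what you have written is a research plan toward a result the authors leave open.

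As a plan it has two genuine gaps. First, your second Betti splitting $I(G)J=I(G)^2+y\,\mathfrak{m} I(G)$ cannot be obtained by the $y$-grading argument you invoke: both the intersection $yI(G)^2$ and the summand $y\,\mathfrak{m} I(G)$ sit entirely in $y$-degree $1$, so after stripping $y$ the relevant map is $\mathrm{Tor}_i(I(G)^2)\to\mathrm{Tor}_i(\mathfrak{m} I(G))$ induced by the honest inclusion $I(G)^2\subseteq\mathfrak{m} I(G)$, and nothing about the $y$-grading forces this to vanish. (Your first splitting $J^2=I(G)J+y^2\mathfrak{m}^2$ does hold, but again not purely by $y$-degree: one also needs that $\mathfrak{m}^2$ has a linear resolution, so that any multidegree supporting $\beta_i$ of the intersection $y^2\mathfrak{m} I(G)$ is too large to support $\beta_i$ of $y^2\mathfrak{m}^2$.) Second, and more seriously, your ``core claim'' that $\mathfrak{m} I(G)$ inherits the characteristic-dependence of $I(G)$ for connected $G$ is not an auxiliary lemma but essentially the conjecture itself in disguise. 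The comparison you sketch via the map $S\mapsto S\setminus\{\text{isolated vertices of }G[S]\}$ is between posets of vertex subsets, whereas the atoms of $L_{\mathfrak{m} I(G)}$ are degree-$3$ monomials, many of them non-squarefree (namely $x_j^2x_k$ for each edge $\{j,k\}$); the order complexes you would need to compare are not built on the same ground set, and the fibers of any reasonable comparison map are exactly where the difficulty lies. You flag this as ``the main obstacle'', and it is: resolving it would amount to proving the conjecture.
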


It is easy to prove that the Betti numbers of $J$ depend on the field. In fact, $J$ is the Stanley-Reisner ideal of $\Delta \cup \{n+1\}$, where $\Delta$ is the Stanley-Reisner complex of $I(G)$. Thus, the Betti numbers of $J$ depend on the field by Hochster's formula.

However, the analog of Conjecture \ref{C.coneEdgeIdeals} for $J^3$ does not hold.

\begin{example}
Consider Katzman's edge ideal $I(G)$ in Example \ref{E.KatzmanEdgeIdeal} and $J = I(G) + x_{12}(x_1,\dots,x_{11}) \subseteq k[x_1,\dots,x_{12}]$. Then, the Betti numbers of $J$ and $J^2$ are different over $\Q$ and over $\Z_2$, but this does not happen for $J^3$.
\end{example}

We noticed that, if $I$ is a squarefree monomial ideal that is not generated only in degree two, then Conjecture \ref{C.coneEdgeIdeals} does not hold for $J^2$. This is the case for the ideal of the real projective plane \eqref{Eq.ProjectivePlane}.

\medskip

In Theorem \ref{T.KleinBottle} and Section \ref{S.KimuraTeraiYoshida} we presented examples of simplicial complexes $\Delta$ of dimension $\geq 2$ such that the Betti numbers of $I_\Delta^h$ depend on the field for every $h \geq 1$. On the other hand, even if the Betti numbers of the Stanley-Reisner ideal $I_\Delta$ of the real projective plane \eqref{Eq.ProjectivePlane} differ over $\Q$ and $\Z_2$, this is not the case for the first few powers of $I_\Delta$.

\begin{question}
Which topological spaces admit a triangulation $\Delta$ such that the Stanley-Reisner ideal $I_\Delta$ and all its powers have characteristic dependendent Betti numbers?
Can we find such simplicial complexes $\Delta$ of dimension $1$?
\end{question}

In Theorem \ref{T.KleinBottle} we saw an ideal such that the Betti numbers of all its powers depend on the field and in Example \ref{E.nonDepEdgeIdealWithCharDepPowers} we showed another ideal such that the same holds for all powers starting from the second one. It is then natural to ask the following:

\begin{question}
Given $h \geq 1$, can we find a monomial ideal $I_h$ such that the Betti numbers of $I_h^\ell$ do not depend on the field for $\ell < h$ and depend on the field for $\ell \geq h$?
\end{question}

Proposition \ref{P.KodiyalamPolynomials} shows that, given a monomial ideal $I$, for every $i$ there exists $h_i$ such that the Betti number $\beta_i(I^\ell)$ either depends on the field for every $\ell \geq h_i$ or it does not for every $\ell \geq h_i$. 

\begin{question}
Can we have any possible behaviour in the first powers?
\end{question}

In Theorem \ref{T.Kodiyalam} we saw that, given $i \geq 3$ and $r \in \N$, we can construct a monomial ideal $I$ for which the Kodiyalam polynomials $\mathfrak P_3^k(I), \dots, \mathfrak P_{i+r}^k(I)$ have a term of degree at least $r$ that depends on the characteristic of the field. However, we do not have control on $\deg(\mathfrak P_i^k(I))$.

\begin{question}
Is there a monomial ideal $I$ such that $\deg(\mathfrak P_i^k(I))$ or the coefficient of the top degree term of $\mathfrak P_i^k(I)$ depend on the field for some $i$?
\end{question}

In this paper we have compared the behaviour of Betti numbers of 
powers of monomial ideals when
taking coefficients over $\Z_p$ for a fixed prime $p$ and coefficients in
$\Q$. We also discussed extension to finite sets of primes. 
By Hochster's formula or the lcm-lattice formula it is obvious 
that the Betti numbers are constant for all but finitely many primes.
The situation for powers is less obvious. Even though we expect a positive answer, we
see no argument which could resolve the following question.

\begin{question}
  Let $I$ be a fixed monomial ideal and $i$ a fixed number. Is the set of
  sequences $(\beta_i^{\Z_p}(I^h))_{h \geq 1}$ where $p$ runs over all
  primes always finite ?
\end{question}

For example, we cannot rule out that there is a sequence of numbers $h(p)$, strictly increasing in $p$, such that 
$\beta_i^{\Z_p}(I^{h(p)}) \neq \beta_i^{\Z_q}(I^{h(p)})$ for
all primes $q \neq p$, while the $i$-the Betti numbers are 
identical otherwise.

\subsection{Binomial edge ideals} \label{bei}

In this paper we mainly dealt with monomial ideals. It makes sense to ask the same questions for other classes of combinatorially defined ideals, such as binomial edge ideals.

Given a field $k$ and a finite simple graph $G$ with vertex set $\{1, \dots, n\}$ and edge set $E(G)$, the \textit{binomial edge ideal} associated to $G$ and $k$ is the ideal
\[
J_G = (x_i y_j - x_j y_i : \{i,j\} \in E(G) )
\]
in the polynomial ring $k[x_1, \dots, x_n, y_1, \dots, y_n]$, where for simplicity we omit the $k$ in the notation $J_G$.
This class of ideals was introduced independently in \cite{HHHKR10} and \cite{O11} and has been extensively studied in the last decade. In \cite[Example 7.6]{BMS21}, the first three authors exhibit a graph $G$ such that the Betti numbers of $J_G$ depend on the field. However, it is still unknown whether the projective dimension or the regularity of $J_G$ may depend on the characteristic. 

In this section we provide some interesting examples for which the Betti numbers of some power of $J_G$ depend on the characteristic of the field. In particular, in the next example we show that the projective dimension of $J_G^3$ may be characteristic-dependent even if the Betti numbers of $J_G$ and $J_G^2$ are not.

\begin{example}
Consider the graphs $C$ and $D$ in Figure \ref{F.bei1}. \textit{Macaulay2} computations show that the Betti numbers of $J_{C}$, $J_{C}^2$, $J_{C}^3$, and of $J_{D}$, $J_{D}^2$ do not change when computed over $\mathbb{Q}$ or $\mathbb{Z}_2$. However, $\beta^{\mathbb{Z}_2}_5(J_{C}^4) \neq \beta^{\mathbb{Q}}_5(J_{C}^4)$ and $\pd_{\mathbb{Z}_2} (J_{D}^3) \neq \pd_{\mathbb{Q}} (J_{D}^3)$. 

\begin{figure}[ht!]
\begin{subfigure}[c]{0.45\textwidth}
\centering
\begin{tikzpicture}[scale=0.8]
\node[label={below:{\small $4$}}] (a) at (0,0) {};
\node[label={above:{\small $1$}}] (b) at (0,2) {};
\node[label={below:{\small $3$}}] (c) at (2,0) {};
\node[label={above:{\small $2$}}] (d) at (2,2) {};
\node[label={below:{\small $6$}}] (e) at (3,1) {};
\node[label={above:{\small $5$}}] (f) at (3,3) {};
\draw (2,0) -- (0,0) -- (0,2) -- (2,2) -- (2,0) -- (3,1) -- (3,3) -- (0,2);
\draw (3,3) -- (2,2);
\end{tikzpicture}
\caption{The graph $C$}
\end{subfigure}
\begin{subfigure}[c]{0.45\textwidth}
\centering
\begin{tikzpicture}[scale=0.8]
\node[label={below:{\small $1$}}] (a) at (0,0) {};
\node[label={above:{\small $2$}}] (b) at (0,2) {};
\node[label={below:{\small $5$}}] (c) at (2,0) {};
\node[label={above:{\small $3$}}] (d) at (1,3) {};
\node[label={above:{\small $4$}}] (e) at (2,2) {};
\node[label={below:{\small $6$}}] (f) at (1,1.5) {};
\draw (f) -- (a) -- (b) -- (d) -- (e) -- (c) -- (f) -- (b);
\draw (d) -- (f) -- (e);
\draw (a) -- (c);
\end{tikzpicture}

\caption{The graph $D$}
\end{subfigure}
\vspace{-4mm}
\caption{} \label{F.bei1}
\end{figure}
\end{example}
\vspace{-5mm}

Finally, we show some connected graphs with a small number of vertices whose binomial edge ideal has Betti numbers that change in several characteristics.

\begin{example}
Let $E$ and $F$ be the graphs in Figure \ref{F.bei2}.
Computations with \textit{Macaulay2} show that the Betti numbers of $J_{E}$ and $J_{E}^2$ are different in characteristic $0$, $2$, and $3$. For instance, $\beta_7^{\mathbb{Z}_2}(J_{E})=\beta_7^{\mathbb{Z}_3}(J_{E})+1=\beta_7^{\mathbb{Q}}(J_{E})+2$ and $\beta_7^{\mathbb{Z}_2}(J_{E}^2)=\beta_7^{\mathbb{Z}_3}(J_{E}^2)+3=\beta_7^{\mathbb{Q}}(J_{E}^2)+7$.
Moreover, the Betti numbers of $J_{F}$ are the same in these three characteristics, but they become different when we consider its square. Indeed, $\beta_3^{\mathbb{Z}_2}(J_{F}^2)-2=\beta_3^{\mathbb{Z}_3}(J_{F}^2)=\beta_3^{\mathbb{Q}}(J_{F}^2)$ and $\beta_5^{\mathbb{Z}_2}(J_{F}^2)=\beta_5^{\mathbb{Z}_3}(J_{F}^2)-2=\beta_5^{\mathbb{Q}}(J_{F}^2)$.

\begin{figure}[ht!]
\begin{subfigure}[c]{0.45\textwidth}
\centering
\begin{tikzpicture}[scale=0.6]
\node[label={below:{\small $5$}}] (a) at (0,0) {};
\node[label={left:{\small $6$}}] (b) at (0,2) {};
\node[label={above:{\small $1$}}] (b) at (0,4) {};
\node[label={below:{\small $4$}}] (d) at (4,0) {};
\node[label={above:{\small $2$}}] (e) at (2,4) {};
\node[label={above:{\small $3$}}] (f) at (4,4) {};
\node[label={below:{\small $7$}}] (f) at (2,2) {};
\node[label={below:{\small $8$}}] (c) at (0.5,2.5) {};
\node[label={right:{\small $9$}}] (c) at (1.5,3.5) {};
\draw (0,0) -- (0,2) -- (0,4) -- (2,4) -- (4,4) -- (4,0) -- (0,0) -- (2,2) -- (4,4);
\draw (4,0) -- (2,2) -- (0.5, 2.5) -- (0,4) -- (1.5, 3.5) -- (2,2);
\end{tikzpicture}
\caption{The graph $E$}
\end{subfigure}
\begin{subfigure}[c]{0.45\textwidth}
\centering
\begin{tikzpicture}[scale=0.8]
\node[label={above:{\small $1$}}] (a) at (-3,2) {};
\node[label={above:{\small $2$}}] (b) at (-1,2) {};
\node[label={above:{\small $3$}}] (c) at (0,2) {};
\node[label={above:{\small $4$}}] (d) at (1,2) {};
\node[label={above:{\small $5$}}] (e) at (3,2) {};
\node[label={below:{\small $8$}}] (f) at (-2,0) {};
\node[label={below:{\small $7$}}] (g) at (0,0) {};
\node[label={below:{\small $6$}}] (h) at (2,0) {};
\draw (a) -- (b) -- (c) -- (d) -- (e) -- (h) -- (g) -- (f) -- (a);
\draw (b) -- (g) -- (d);
\end{tikzpicture}
\caption{The graph $F$}
\end{subfigure}
\vspace{-3mm}
\caption{} \label{F.bei2}
\end{figure}
\end{example}

\vspace{-4mm}
\begin{question}
Let $G$ be a finite simple graph. In contrast to the case of monomial ideals, in numerous computer experiments we noticed that, if the Betti numbers of $J_{G}^h$ depend on the characteristic for some $h$, then the same holds for $J_{G}^{h'}$ for every $h' \geq h$. Is this always the case?
\end{question}


\begin{thebibliography}{99}
\bibitem{BBH19} \textsc{A.~Banerjee, S.~Beyarslan, H.~T.~H\'a}, \textit{Regularity of edge ideals and their powers}, in J.~Feldvoss (ed.) et al., \textit{Advances in Algebra, SRAC 2017}, Springer Proc. Math. Stat. \textbf{277}, pages 17--52, Cham, Springer, 2019.
\bibitem{B16} {\sc D.~Bolognini}, \textit{Betti splitting via componentwise linear ideals}, J. Algebra \textbf{455} (2016), pages 1--13.
\bibitem{BF20} {\sc D.~Bolognini, U.~Fugacci}, \textit{Betti splitting from a topological point of view}, J. Algebra Appl. \textbf{19} (2020), 6, paper 2050116.
\bibitem{BMS21} {\sc D.~Bolognini, A.~Macchia, F.~Strazzanti}, \textit{Cohen-Macaulay binomial edge ideals and accessible graphs}, J. Algebr. Comb. (2021), \href{https://doi.org/10.1007/s10801-021-01088-w}{https://doi.org/10.1007/s10801-021-01088-w}.
\bibitem{BH95} \textsc{W.~Bruns, J.~Herzog}, \textit{On multigraded resolutions}, Math. Proc. Camb. Phil. Soc. \textbf{118} (1995), 118, 245--257.
\bibitem{C94} \textsc{D.~P.~Cervone}, \textit{Vertex-minimal simplicial immersions of the Klein bottle in three space}, Geometriae Dedicata \textbf{50} (1994), 2, 117--141.
\bibitem{CHT99} \textsc{S.~D.~Cutkosky, J.~Herzog, N.~V.~Trung}, \textit{Asymptotic behaviour of the Castelnuovo-Mumford regularity}, Compos. Math. \textbf{118} (1999), 243--261.
\bibitem{DK14} \textsc{K.~Dalili, M.~Kummini}, \textit{Dependence of Betti numbers on characteristic}, Comm. Algebra \textbf{42} (2014), 563--570.
\bibitem{EMO19} \textsc{J.~Eagon, E.~Miller, E.~Ordog}, \textit{Minimal resolutions of monomial ideals}, preprint (2019), \href{https://arxiv.org/abs/1906.08837}{arXiv:1906.08837}.
\bibitem{FHV09} \textsc{C.~A.~Francisco, H.~T.~Hà, A.~Van Tuyl}, \textit{Splittings of monomial ideals}, Proc. Amer. Math. Soc. \textbf{137} (2009), 3271--3282.
\bibitem{GPW99} \textsc{V.~Gasharov, I.~Peeva, V.~Welker}, \textit{The lcm-lattice in monomial resolutions}, Math. Res. Lett. \textbf{6} (1999), 5--6, 521--532.
\bibitem{M2} \textsc{D.~R.~Grayson, M.~E.~Stillman}, \textit{Macaulay2, a software system for research in Algebraic Geometry}, available at \href{http://www.math.uiuc.edu/Macaulay2/}{http://www.math.uiuc.edu/Macaulay2/}.
\bibitem{HTT15} \textsc{H.~T.~Hà, N.~V.~Trung, T.~N.~Trung}, \textit{Depth and regularity of powers of sums of ideals}, Math. Z. \textbf{282} (2016), 819--838.
\bibitem{HH11} {\sc J.~Herzog, T.~Hibi}, \textit{Monomial ideals}, Graduate Texts in Mathematics \textbf{260}, London, Springer, 2011. 
\bibitem{HHHKR10} {\sc J.~Herzog, T.~Hibi, F.~Hreinsdóttir, T.~Kahle, J.~Rauh}, \textit{Binomial edge ideals and conditional independence statements}, Adv. in Appl. Math. \textbf{45} (2010), 317--333.
\bibitem{HW11} \textsc{J.~Herzog, V.~Welker}, \textit{The Betti polynomials of powers of an ideal}, J. Pure Appl. Algebra \textbf{215} (2011), 589--596.
\bibitem{H77} \textsc{M.~Hochster}, \textit{Cohen-Macaulay rings, combinatorics, and simplicial complexes}, Lecture Notes in Pure and Appl. Math., Vol. \textbf{26}, pages 171--223, 1977.
\bibitem{K06} \textsc{M.~Katzman}, \textit{Characteristic-independence of Betti numbers of graph ideals}, J. Combin. Theory Ser. A \textbf{113} (2006), 435--454.
\bibitem{KTY09} \textsc{K.~Kimura, N.~Terai, K.~Yoshida}, \textit{Arithmetical rank of monomial ideals of deviation two}, in \textit{Combinatorial Aspects of Commutative Algebra}, Contemp. Math. \textbf{502}, pp. 73--112, American Mathematical Society, Providence, 2009.
\bibitem{K93} \textsc{V.~Kodiyalam}, \textit{Homological invariants of powers of an ideal}, Proc. Amer. Math. Soc. \textbf{118} (1993), 757--764.
\bibitem{MV21} \textsc{N.~C.~Minh, T.~Vu}, \textit{Integral closure of small powers of edge ideals and their regularity}, preprint  (2021) \href{https://arxiv.org/abs/2109.09268}{arXiv:2109.09268}.
\bibitem{M84} \textsc{J.~R.~Munkres}, \textit{Elements of algebraic topology}, Addison-Wesley Publishing Company, Redwood City, California, 1984.
\bibitem{O11} \textsc{M.~Ohtani}, \textit{Graphs and ideals generated by some $2$-minors}, Comm. Algebra \textbf{39} (2011), 3, 905--917.
\bibitem{R76} \textsc{G.~A.~Reisner}, \textit{Cohen-Macaulay quotients of polynomial rings}, Adv. Math. \textbf{21} (1976), 30--49. 
\bibitem{SW11} \textsc{A.~K.~Singh, U.~Walther}, \textit{Bockstein homomorphisms in local cohomology}, J. Reine Angew. Math. \textbf{655} (2011), 147--164.
\end{thebibliography}
\end{document}